\numberwithin{equation}{section}
\theoremstyle{plain}
\newtheorem{thm}{Theorem}[section]
\newtheorem{prop}[thm]{Proposition}
\newtheorem{defi}[thm]{Definition}
\newtheorem{rem}[thm]{Remark}
\newtheorem{hyp}[thm]{Hypothesis}
\newcommand{\Rep}{\mbox{Rep}}
\newcommand{\HS}{{\mathtt{HS}}}
\newcommand{\Tr}{\mbox{\emph{Tr}}}
\newcommand{\N}{{\mathbb{N}}}
\newcommand{\R}{\mathbb{R}}
\newcommand{\Z}{\mathbb{Z}}
\newcommand{\C}{\mathbb{C}}
\newcommand{\St}{{\mathbb S}^3}
\newcommand{\DG}{\mathcal{D}'(G)}
\newcommand{\jp}[1]{{\left\langle{#1}\right\rangle}}
\newlength{\dhatheight}
\newcommand{\doublehat}[1]{%
	\settoheight{\dhatheight}{\ensuremath{\hat{#1}}}
	\addtolength{\dhatheight}{-0.15ex}
	\widehat{\vphantom{\rule{5pt}{\dhatheight}}%
		\smash{\widehat{#1}}}}
\begin{document}

%
%
%
%
%
%
%
%
%


\title[Global Properties of Vector Fields in Komatsu classes II]{Global Properties of Vector Fields on Compact \\  Lie Groups in Komatsu classes. II. Normal Forms}


\author[Alexandre Kirilov]{Alexandre Kirilov}
\address{
	Universidade Federal do Paran\'{a}, 
	Departamento de Matem\'{a}tica,
	C.P.19096, CEP 81531-990, Curitiba, Brazil
}
\email{akirilov@ufpr.br}


\author[Wagner de Moraes]{Wagner A. A. de Moraes}
\address{
	Universidade Federal do Paran\'{a},
	Programa de P\'os-Gradua\c c\~ao de Matem\'{a}tica,
	C.P.19096, CEP 81531-990, Curitiba, Brazil
}
\email{wagneramat@gmail.com}


\author[Michael Ruzhansky]{Michael Ruzhansky}
\address{Ghent University, 
	Department of Mathematics: Analysis, Logic and Discrete Mathematics, 
	Ghent, Belgium 
	and 
	Queen Mary University of London, 
	School of Mathematical Sciences, 
	London, United Kingdom
}
\email{Michael.Ruzhansky@ugent.be}

\subjclass[2010]{Primary 35R03, 34C20; Secondary 35H10, 22E30}

\keywords{compact Lie groups, global hypoellipticity, global solvability, Komatsu classes, normal form}


\begin{abstract}
Let $G_1$ and $G_2$ be compact Lie groups, $X_1 \in \mathfrak{g}_1$, $X_2 \in \mathfrak{g}_2$ and consider the operator
\begin{equation*}
L_{aq} = X_1 + a(x_1)X_2 + q(x_1,x_2),
\end{equation*}
where  $a$ and $q$ are ultradifferentiable functions in the sense of Komatsu, and $a$ is real-valued.  We characterize completely the global hypoellipticity and the global solvability of $L_{aq}$ in the sense of Komatsu. For this, we present a conjugation between $L_{aq}$ and a constant-coefficient operator that preserves these global properties in Komatsu classes. We also present examples of globally hypoelliptic and globally solvable operators on $\mathbb{T}^1\times \St$ and $\St\times \St$ in the sense of Komatsu. In particular, we give examples of differential operators which are not globally $C^\infty$--solvable, but are globally solvable in Gevrey spaces.
\end{abstract}

\maketitle
\tableofcontents

\section{Introduction}

The present paper is a continuation of our paper \cite{KMR19c} where we have characterized the global hypoellipticity and global solvability in the sense of Komatsu (of Roumieau and Beurling types) of constant-coefficients vector fields defined on compact Lie groups, and the influence of lower-order perturbations in the preservation of these properties. 

In this paper, we present a class of first-order operators with variable coefficients that can be reduced to a constant-coefficient operator employing a conjugation. Such a reduction ensures that the original operator and the conjugated constant-coefficient operator have the same type of global properties in Komatsu sense. This equivalence was inspired in reduction to normal forms, which is a technique widely used in this context, see for example \cite{AGK18,AGKM18,CC00,DicGraYosh02,Hou82,Pet11}. It should be emphasized here that, as far as we know, this is the first time that this technique is extended to ultradifferentiable functions and ultradistributions in Komatsu classes, especially in the Beurling setting. In the case of Gevrey spaces of Roumieu type, this technique was used in \cite{AKM19} and \cite{BDG18} to reduce vector fields (and systems of vector fields) defined on tori to their normal forms. In the case of Lie groups, this reduction was firstly used in \cite{KMR19b} in the smooth and distributional cases.

The definition of such a conjugation depends on the characterization of ultradifferentiable functions and ultradistributions through their partial Fourier series in Komatsu classes, which is done in Section 3. With this characterization in place, in Section 4, we use the properties of the Komatsu classes to show that the conjugation is well defined in Komatsu classes, in Roumieu and Beurling settings. Finally, we obtain the normal form of the given operator and, in Section 5, we provide the characterization of global hypoellipticity and global solvability in the sense of Komatsu.

We conclude the paper presenting new examples of globally hypoelliptic and globally solvable operators, in the sense of Komatsu, on $\mathbb{T}^1\times \St$ and $\St\times \St$. In particular, we construct an example of operator with variable coefficients that is neither globally hypoelliptic in the sense of Komatsu, nor globally solvable in $C^\infty$--sense, but it is globally solvable in Gevrey spaces.

\section{Preliminaries}

In this section, we recall most of the notations and preliminary results necessary for the development of this study. A very careful presentation of these concepts and the demonstration of all the results presented here can be found in the references \cite{FR16}  and \cite{livropseudo}.

Let $G$ be a compact Lie group and let $\Rep(G)$  be the set of continuous irreducible unitary representations of $G$. Since $G$ is compact, every continuous irreducible unitary representation $\phi$ is finite dimensional and it can be viewed as a matrix-valued function $\phi: G \to \C^{d_\phi\times d_\phi}$, where $d_\phi = \dim \phi$. We say that $\phi \sim \psi$ if there exists an unitary matrix $A\in C^{d_\phi \times d_\phi}$ such that $A\phi(x) =\psi(x)A$, for all $x\in G$. We will denote by $\widehat{G}$ the quotient of $\Rep(G)$ by this equivalence relation.

For $f \in L^1(G)$ the group Fourier transform of $f$ at $\phi \in \Rep(G)$ is
\begin{equation*}
\widehat{f}(\phi)=\int_G f(x) \phi(x)^* \, dx,
\end{equation*}
where $dx$ is the normalized Haar measure on $G$.
By the Peter-Weyl theorem, we have that 
\begin{equation}\label{ortho}
\mathcal{B} := \left\{\sqrt{d_\phi} \, \phi_{ij} \,; \ \phi=(\phi_{ij})_{i,j=1}^{d_\phi}, [\phi] \in \widehat{G} \right\},
\end{equation}
is an orthonormal basis for $L^2(G)$, where we pick only one matrix unitary representation in each class of equivalence, and we may write
\begin{equation*}
f(x)=\sum_{[\phi]\in \widehat{G}}d_\phi \emph{\Tr}(\phi(x)\widehat{f}(\phi)).
\end{equation*}
Moreover, the Plancherel formula holds:
\begin{equation*}
\label{plancherel} \|f\|_{L^{2}(G)}=\left(\sum_{[\phi] \in \widehat{G}}  d_\phi \ 
\|\widehat{f}(\phi)\|_{\HS}^{2}\right)^{1/2}=:
\|\widehat{f}\|_{\ell^{2}(\widehat{G})},
\end{equation*}
where 
\begin{equation*} \|\widehat{f}(\phi)\|_{\HS}^{2}=\emph{\Tr}(\widehat{f}(\phi)\widehat{f}(\phi)^{*})=\sum_{i,j=1}^{d_\phi}  \bigr|\widehat{f}(\phi)_{ij}\bigr|^2.
\end{equation*}

The group Fourier transform of $u\in \DG$ at a matrix unitary representation $\phi$ is the matrix $\widehat{u}(\phi) \in \C^{d_\phi \times d_\phi}$, whose components are given by
$$
\widehat{u}(\phi)_{ij} = \jp{u,\overline{\phi_{ji}}},
$$
where $\jp{\,\cdot\,,\,\cdot\,}$ denotes the distributional duality.

Let $\mathcal{L}_G$ be the Laplace-Beltrami operator of $G$. For each $[\phi] \in \widehat{G}$, its matrix elements are eigenfunctions of $\mathcal{L}_G$ correspondent to the same eigenvalue that we will denote by $-\nu_{[\phi]}$, where $\nu_{[\phi]} \geq 0$. Thus
\begin{equation}\label{laplacian}
-\mathcal{L}_G \phi_{ij}(x) = \nu_{[\phi]}\phi_{ij}(x), \quad \textrm{for all } 1 \leq i,j \leq d_\phi,
\end{equation}
and we will denote by
$$
\jp \phi := \left(1+\nu_{[\phi]}\right)^{1/2}
$$
the eigenvalues of $(I-\mathcal{L}_G)^{1/2}.$ We have the following estimate for the dimension of $\phi$ (Proposition 10.3.19 of \cite{livropseudo}): there exists $C>0$ such that for all $[\xi] \in \widehat{G}$ it holds
\begin{equation*}\label{dimension}
d_\phi \leq C \jp{\phi}^{\frac{\dim G}{2}}.
\end{equation*}

For $x\in G$, $X\in \mathfrak{g}$ and $f\in C^\infty(G)$, define 
$$
L_Xf(x):=\frac{d}{dt} f(x\exp(tX))\bigg|_{t=0}.
$$

The operator $L_X$ is left-invariant, that is, $\pi_L(y)L_X = L_X\pi_L(y)$, for all $y \in G$. When there is no possibility of ambiguous meaning, we will write only $Xf$ instead of $L_Xf$. 

%

Let  $P: C^{\infty}(G) \to C^{\infty}(G)$ be a continuous linear operator. The  symbol of the operator $P$ in $x\in G$ and $\phi \in \mbox{{Rep}}(G)$, $\phi=(\phi_{ij})_{i,j=1}^{d_\phi}$ is
$$
\sigma_P(x,\phi) := \phi(x)^*(P\phi)(x) \in \C^{d_\phi \times d_\phi},
$$
where $(P\phi)(x)_{ij}:= (P\phi_{ij})(x)$, for all $1\leq i,j \leq d_\phi$, and we have
$$
Pf(x) = \sum_{[\phi] \in \widehat{G}} \dim (\phi) \mbox{Tr} \left(\phi(x)\sigma_P(x,\phi)\widehat{f}(\phi)\right),
$$
for all $f \in C^\infty(G)$ and $x\in G$.

When $P: C^\infty(G) \to C^\infty(G)$ is a continuous linear left-invariant operator, that is $P\pi_L(y)=\pi_L(y)P$, for all $y\in G$, we have that $\sigma_P$ is independent of $x\in G$ and
$$
\widehat{Pf}(\phi) = \sigma_P(\phi)\widehat{f}(\phi),
$$
for all $f \in C^\infty(G)$ and $[\phi] \in \widehat{G}$ and, by duality, this remains true for all $f \in \mathcal{D}'(G)$. For instance, by relation \eqref{laplacian}, we obtain 
\begin{equation}\label{symbollaplacian}
	\widehat{\mathcal{L}_Gf}(\phi) = -\nu_{[\phi]}\widehat{f}(\phi),
\end{equation}
for all $f\in \mathcal{D}'(G)$ and $[\phi] \in \widehat{G}$.

Let $Y\in \mathfrak{g}$. It is easy to see that the operator $iY$ is symmetric on $L^2(G)$. Hence, for all $[\phi] \in \widehat{G}$ we can choose a representative $\phi$ such that $\sigma_{iY}(\phi)$ is a diagonal matrix, with entries denoted by $\lambda_m(\phi) \in \R$, $1 \leq m \leq d_\phi$. By the linearity of the symbol, we obtain
$$
\sigma_X(\phi)_{mn} = i\lambda_m(\phi) \delta_{mn}, \quad \lambda_j(\phi) \in \R.
$$
Notice that $\{\lambda_m(\phi)\}_{m=1}^{d_\phi}$ are the eigenvalues of $\sigma_{iX}(\phi)$ and they are independent of the choice of the representative, since the symbol of equivalent representations are similar matrices. Moreover, since $-(\mathcal{L}_G - X^2)$ is a positive operator and commutes with $X^2$, we have
\begin{equation}\label{symbol}
|\lambda_m(\phi)| \leq \jp{\phi},
\end{equation}
for all $[\phi] \in \widehat{G}$ and $1 \leq m \leq d_\phi$.

Let $G_1$ and $G_2$ be compact Lie groups and set $G=G_1\times G_2$. Given $f \in L^1(G)$ and  $\xi \in {\Rep}(G_1)$, the partial Fourier coefficient of $f$ with respect to the first variable  is defined by 
$$
\widehat{f}(\xi, x_2) = \int_{G_1} f(x_1,x_2)\, \xi(x_1)^* \, dx_1 \in \C^{d_\xi \times d_\xi}, \quad x_2 \in G_2,
$$
with components
$$
\widehat{f}(\xi, x_2)_{mn} = \int_{G_1} f(x_1,x_2)\, \overline{\xi(x_1)_{nm}} \, dx_1, \quad 1 \leq m,n\leq d_\xi.
$$
Analogously we define the partial Fourier coefficient of $f$ with respect to the second variable. Notice that, by definition, $\widehat{f}(\xi,\: \cdot \:)_{mn} \in C^\infty(G_2)$ and $\widehat{f}(\: \cdot \:, \eta)_{rs} \in C^\infty(G_1)$. 

Let $u \in \DG$, $\xi \in {\Rep}(G_1)$ and $1\leq m,n \leq d_\xi$. The $mn$-component of  the partial Fourier coefficient of $u$ with respect to the first variable is the linear functional defined by
$$
\begin{array}{rccl}
\widehat{u}(\xi, \: \cdot\: )_{mn}: & C^\infty(G_2) & \longrightarrow & \C \\
& \psi & \longmapsto & \jp{\widehat{u}(\xi, \: \cdot \:)_{mn},\psi} := \jp{u,\overline{\xi_{nm}}\times\psi}_G.
\end{array}
$$
In a similar way, for $\eta \in {\Rep}(G_2)$ and $1 \leq r,s\leq d_\eta$, we define the $rs$-component of the partial Fourier coefficient of $u$ with respect to the second variable.
It is easy to see that $ \widehat{u}(\xi, \: \cdot\: )_{mn} \in \mathcal{D}'(G_2)$ and $\widehat{u}( \: \cdot\:,\eta )_{rs} \in \mathcal{D}'(G_1)$. 

Notice that
\begin{equation*}
\doublehat{\,u\,}\!(\xi,\eta)_{mn_{rs}} = \doublehat{\,u\,}\!(\xi,\eta)_{rs_{mn}} =  \widehat{u}(\xi \otimes \eta)_{ij},
\end{equation*}
with $i = d_\eta(m-1)+ r$ and $j  =  d_\eta(n-1) + s$, whenever $u \in C^\infty(G)$ or $u \in \DG$. More details about partial Fourier series in the framework of smooth functions and distributions can be found in \cite{KMR19}.

In this paper, we deal with operators and their properties in Komatsu classes. So we need to introduce some notations, results and technical lemmas that will be used in the sequel. All definitions are taken from \cite{DR16}, \cite{Kom73} and \cite{Rou60}.

Let $\{ M_k\}_{k \in\N_0}$ be a sequence of positive numbers such that there exist $H>0$ and $A\geq1$ satisfying
\begin{description}
	\item[(M.0)] $M_0=1$.
	\item[(M.1)] (stability)	$M_{k+1} \leq AH^kM_k, \quad k=0,1,2,\dots\,.$ 
	\item[(M.2)] $M_{2k} \leq AH^{2k}M_k^2, \quad k=0,1,2,\dots\,.$
	\item[(M.3)] $\exists \ell, C>0$ such that $k! \leq C\ell^k M_k,$ for all $k \in \N_0$.
	\item[(M.4)] $\dfrac{M_r}{r!} \dfrac{M_s}{s!} \leq \dfrac{M_{r+s}}{(r+s)!}, \quad \forall r,s \in \N_0.$
\end{description}

We will assume also the logarithmic convexity:
\begin{description}
	\item[(LC)]
	$
	M_k^2 \leq M_{k-1}M_{k+1}, \quad k=1,2,3,\dots .
	$
\end{description}

Given any sequence $\{M_k\}$ that satisfies (M.0)--(M.3), there exists an alternative sequence that satisfies the logarithmic convexity and defines the same classes that we will study. So assuming (LC) does not restrict the generality compared to (M.0)--(M.3). The condition (M.4) is used only twice in this paper, in \eqref{1st-time} and \eqref{2nd-time}, to prove that an automorphism is well-defined.

From (M.0) and (LC) we have $M_k \leq M_{k+1}$, for all $k\in \N$, that is, $\{M_k\}$ is a non-decreasing sequence. Moreover, for $k \leq n$ holds
$$
M_k \cdot M_{n-k} \leq M_n.
$$

The condition (M.2) is equivalent to
$
M_k \leq AH^k \min\limits_{0\leq q \leq k} M_qM_{k-q}, 
$ (see \cite{PV84}, Lemma 5.3).

Given a sequence $\{M_k \}$ we define the associated function as
\begin{equation}\label{associated}
M(r):= \sup_{k\in \N_0} \log \frac{r^k}{M_k}, \quad r>0,
\end{equation}
and $M(0):=0$. Notice that $M$ is a non-decreasing function and by its definition,  for every $r>0$ we have
\begin{equation}\label{inf}
\exp\{M(r)\} = \sup_{k\in \N_0} \frac{r^k}{M_k}
\quad \mbox{and} \quad
\exp\{-M(r)\} = \inf_{k\in \N_0} \frac{M_k}{r^k}.
\end{equation}

It follows from these properties that for a compact Lie group $G$, for every $p,q, \delta>0$ there exists $C>0$ such that
\begin{equation}\label{propM1}
\jp{\phi}^p \exp\{-\delta M(q\jp{\phi})\} \leq C,
\end{equation}
for all $[\phi] \in \widehat{G}$. Moreover, for every $q>0$ we have
\begin{equation}\label{komine}
\exp\left\{-\tfrac{1}{2} M\left(q\jp{\phi}\right) \right\} \leq \sqrt{A}\exp\{-M\left(q_2 \jp{\phi}\right) \},
\end{equation}
for all $[\phi] \in \widehat{G}$, where $q_2= \dfrac{q}{H}$ (see \cite{DR14} for more details).

\begin{defi}
	The Komatsu class of Roumieu type $\Gamma_{\{M_k\}}(G)$ is the space of all complex-valued $C^{\infty}$ functions $f$ on $G$ such that there exist $h>0$ and $C>0$ satisfying
	$$
	\| \partial^\alpha f\|_{L^2(G)} \leq Ch^{|\alpha|}M_{|\alpha|},  \quad \forall \alpha \in \N_0^d.
	$$
\end{defi}

In the definition above, we could take the $L^\infty$-norm and obtain the same space. The elements of $\Gamma_{\{M_k\}}(G)$ are often called ultradifferentiable functions and can be characterized by their Fourier coefficients as follows:
\begin{align}
\label{roum}f \in \Gamma_{\{M_k\}}(G) \iff \exists N>0, \ \exists C>0; \quad
|\doublehat{\,f\,}\!(\xi,\eta)_{mn_{rs}}  | \leq C \exp\{-M(N(\jp{\xi}+\jp{\eta})) \},\\ \forall [\xi] \in \widehat{G_1}, \ [\eta] \in \widehat{G_2}, \ 1\leq m,n\leq d_\xi, \ 1 \leq r,s\leq d_\eta. \nonumber
\end{align}
Similarly, the ultradistribution of Roumieu type can be characterized in the following way:
\begin{align}
\label{distrou}u \in \Gamma'_{\{M_k\}}(G) \iff \forall N>0,\  \exists C_N>0; \quad
|\doublehat{\,u\,}\!(\xi,\eta)_{mn_{rs}}  | \leq C_N \exp\{M(N(\jp{\xi}+\jp{\eta})) \},\\ \forall [\xi] \in \widehat{G_1}, \ [\eta] \in \widehat{G_2}, \ 1\leq m,n\leq d_\xi, \ 1 \leq r,s\leq d_\eta. \nonumber
\end{align}	 
Next, to define Komatsu classes of Beurling type, let us replace (M.3) by the following stronger condition:
\begin{description}
	\item[(M.3')] $\forall \ell>0$, $\exists C_\ell$ such that $k! \leq C_\ell \ell^k M_k,$ for all $k \in \N_0$.
\end{description}

\begin{defi}
	The  Komatsu class of Beurling type $\Gamma_{(M_k)}(G)$ is the space of $C^{\infty}$ functions $f$ on $G$ such that for every $h>0$ there exists $C_h>0$ such that we have
	$$
	\| \partial^\alpha f\|_{L^2(G)} \leq C_hh^{|\alpha|}M_{|\alpha|}, \quad   \forall \alpha \in \N_0^n.
	$$
\end{defi}

Notice that $\Gamma_{(M_k)}(G) \subset \Gamma_{\{M_k\}}(G)$.
The elements of $\Gamma_{(M_k)}(G)$ can be characterized by their Fourier coefficients as follows:
\begin{align}
\label{caracbeurling}f \in \Gamma_{(M_k)}(G) \iff \forall N>0, \ \exists C_N>0; \quad
|\doublehat{\,f\,}\!(\xi,\eta)_{mn_{rs}} | \leq C_N \exp\{-M(N(\jp{\xi}+\jp{\eta})) \},\\ \forall [\xi] \in \widehat{G_1}, \ [\eta] \in \widehat{G_2}, \ 1\leq m,n\leq d_\xi, \ 1 \leq r,s\leq d_\eta. \nonumber
\end{align}
Similarly, the ultradistribution of Beurling type can be characterized in the following way:
\begin{align}
\label{distbeur}u \in \Gamma'_{(M_k)}(G) \iff \exists N>0,\  \exists C>0; \quad
|\doublehat{\,u\,}\!(\xi,\eta)_{mn_{rs}}  | \leq C \exp\{M(N(\jp{\xi}+\jp{\eta})) \},\\ \forall [\xi] \in \widehat{G_1}, \ [\eta] \in \widehat{G_2}, \ 1\leq m,n\leq d_\xi, \ 1 \leq r,s\leq d_\eta. \nonumber
\end{align}

\section{Partial Fourier series in Komatsu classes}
In this section, we will present the characterization of ultradifferentiable functions and ultradistributions in Komatsu classes of both Roumieu and Beurling types through the analysis of the behavior of their partial Fourier series. This will allow us to study global properties of a variable coefficient operator on a product of compact Lie groups analyzing its normal form, which was completely characterized in \cite{KMR19c}. First, we present some technical results on the associated function that we will use throughout the text.
\begin{prop}\label{prop2}
	For every $r,s>0$ we have
	\begin{enumerate}[(i)]
		\item $
		\exp\{ -M(r)\} \exp\{-M(s) \} \leq \exp\left\{-M\left(\frac{r+s}{2}\right) \right\};
		$
		\item $
		\exp\{ M(r)\} \exp\{M(s) \} \leq A\exp\left\{M\left(H(r+s)\right) \right\}.
		$
	\end{enumerate}
\end{prop}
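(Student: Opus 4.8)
The plan is to exploit the two formulas for $\exp\{\pm M(r)\}$ recorded in \eqref{inf}, together with the monotonicity of the associated function $M$ and the equivalent form of (M.2), reducing both inequalities to elementary manipulations of the defining suprema and infima. No analytic estimate is really needed; both parts are formal properties of $M$ once the right representation is chosen.

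For part (i), the key observation is that $M$ is non-decreasing and non-negative: indeed $M(r) \geq \log \frac{r^0}{M_0} = 0$ by (M.0). Since the arithmetic mean satisfies $\frac{r+s}{2} \leq \max\{r,s\}$, monotonicity of $M$ gives
$M\p{\tfrac{r+s}{2}} \leq M(\max\{r,s\}) = \max\{M(r),M(s)\} \leq M(r)+M(s)$,
where the last inequality uses $M(r),M(s)\geq 0$. Negating this chain and exponentiating yields (i). I expect this part to be immediate once the non-negativity of $M$ is noted.

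For part (ii), I would start from the first identity in \eqref{inf}, writing the left-hand side as a double supremum $\exp\{M(r)\}\exp\{M(s)\} = \sup_{k,j\in\N_0} \frac{r^k s^j}{M_k M_j}$ (the product of suprema of non-negative quantities is the supremum of the products). I would then bound each term by matching the total degree $n := k+j$. The equivalent form of (M.2), namely $M_n \leq AH^n \min_{0\leq q\leq n} M_q M_{n-q}$, applied with $q=k$, gives $M_k M_j \geq A^{-1}H^{-n} M_n$, hence $\frac{r^k s^j}{M_k M_j} \leq A\,\frac{H^n r^k s^j}{M_n}$. Combining this with the elementary inequality $r^k s^j \leq (r+s)^{k+j} = (r+s)^n$ (valid since $r,s>0$) produces $\frac{r^k s^j}{M_k M_j} \leq A\,\frac{(H(r+s))^n}{M_n} \leq A\exp\{M(H(r+s))\}$, the last step again by \eqref{inf}. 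Taking the supremum over $k,j$ completes (ii).

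The only point requiring care is the bookkeeping in (ii): applying the two-index consequence of (M.2) with the correct choice $q=k$, and checking that the powers of $H$ and the constant $A$ assemble exactly into the claimed bound $A\exp\{M(H(r+s))\}$. I do not anticipate a genuine obstacle here, since the whole argument is a formal computation with the suprema once \eqref{inf} and the equivalent form of (M.2) are in hand.
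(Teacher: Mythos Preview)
Your proof is correct in both parts, and part (ii) is essentially the paper's argument: the paper also uses $M_{k+\ell}\leq AH^{k+\ell}M_kM_\ell$ (the equivalent form of (M.2)) together with $r^ks^\ell\leq(r+s)^{k+\ell}$, just phrased via logarithms rather than suprema.

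Part (i), however, takes a genuinely different and shorter route. The paper argues via the infimum representation \eqref{inf} and the inequality $M_jM_\ell\leq M_{j+\ell}$ (a consequence of logarithmic convexity), then applies the binomial identity $\sum_j\binom{k}{j}r^js^{k-j}=(r+s)^k$ to obtain $\exp\{-M(r)\}\exp\{-M(s)\}\leq M_k\big/\big(\tfrac{r+s}{2}\big)^k$ for every $k$. Your argument bypasses all of this: from $\tfrac{r+s}{2}\leq\max\{r,s\}$, monotonicity of $M$, and $M\geq 0$ you get $M(\tfrac{r+s}{2})\leq M(r)+M(s)$ in one line. This is strictly more elementary---it uses neither (LC) nor (M.2)---and shows that (i) holds for the associated function of \emph{any} sequence with $M_0=1$. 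The paper's route, on the other hand, naturally suggests sharper variants (e.g.\ replacing $\tfrac{r+s}{2}$ by other combinations) because it works directly at the level of the defining infimum.
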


\begin{proof}
	$(i)$ Let $r,s>0$. By \eqref{inf} we obtain
	$$
	\exp\{ -M(r)\} \exp\{-M(s) \} \leq \frac{M_j}{r^j} \frac{M_\ell}{s^\ell} \leq \frac{M_{j+\ell}}{r^j s^\ell},
	$$
	for all $j,\ell \in \N_0$. Let $k \in \N_0$. Thus for $\ell = k-j$ we have
	$$
	\exp\{ M(r)\} \exp\{M(s) \}  \geq \frac{r^j s^{k-j}}{M_k},
	$$
	so
	\begin{align*}
	2^k \exp\{ M(r)\} \exp\{M(s) \} = \sum_{j=0}^k \binom{k}{j} \exp\{ M(r)\} \exp\{M(s) \} \geq  \sum_{j=0}^k \binom{k}{j} \frac{r^j s^{k-j}}{M_k} = \frac{(r+s)^k}{M_k},
	\end{align*}
	that is,
	$$
	\exp\{ -M(r)\} \exp\{-M(s) \} \leq \frac{M_k}{\left(\frac{r+s}{2}\right)^k},
	$$
	for all $ k \in \N_0$. Therefore
	$$
	\exp\{ -M(r)\} \exp\{-M(s) \} \leq \exp\left\{-M\left(\tfrac{r+s}{2}\right) \right\}.
	$$
	
	$(ii)$ Let $r,s>0$. We have $M_{k+\ell} \leq AH^{k+\ell}M_kM_\ell$ and $r^ks^\ell \leq (r+s)^{k+\ell}$, for all $k,\ell \in \N_0$. Thus
	\begin{align*}
	\log \frac{r^k}{M_k} + \log\frac{s^\ell}{M_\ell} = \log \frac{r^ks^\ell}{M_k M_\ell} \leq \log A \frac{H(r+s)^{k+\ell}}{M_{k+\ell}} &= \log A + \log \frac{(H(r+s))^{k+\ell}}{M_{k+\ell}} \nonumber \\
	&\leq \log A + M(H(r+s)). \nonumber
	\end{align*}
	For every $\ell \in \N_0$ fixed we have
	$$
	\log \frac{r^k}{M_k} \leq  \log A + M(H(r+s)) - \log\frac{s^\ell}{M_\ell} \implies M(r) \leq \log A + M(H(r+s)) - \log\frac{s^\ell}{M_\ell}.
	$$
	Now,
	$$
	\log\frac{s^\ell}{M_\ell} \leq \log A + M(H(r+s)) - M(r), \quad \forall \ell \in \N_0,
	$$
	which implies that
	$$
	M(s) \leq \log A + M(H(r+s)) - M(r).
	$$ 
	By the properties of the exponential function we obtain
	$$	
	\exp\{ M(r)\} \exp\{M(s) \} \leq A\exp\left\{M\left(H(r+s)\right) \right\},
	$$
	and the proof is complete.
\end{proof}
\begin{prop}\label{propM3}
	For every $r,s>0$ and $t\in\N_0$ we have 
	\begin{enumerate}[(i)]
		\item
		$	
		r^t\exp\{-M(sr) \} \leq A \left(Hs^{-1}\right)^{t} M_t \exp\{-M(H^{-1}sr) \};
		$
		\item
		$	
		r^t\exp\{M(sr) \} \leq A s^{-t} M_t \exp\{M(Hsr) \}.
		$
	\end{enumerate}
\end{prop}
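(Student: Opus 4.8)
The plan is to notice that the parameter $s$ plays no essential role in either inequality. Writing $u=sr$ and dividing through by $s^{t}$, part (i) is equivalent to
\[
u^{t}\exp\{-M(u)\} \leq A H^{t} M_t \exp\{-M(H^{-1}u)\}, \qquad u>0,\ t\in\N_0,
\]
and part (ii) is equivalent to
\[
u^{t}\exp\{M(u)\} \leq A M_t \exp\{M(Hu)\}, \qquad u>0,\ t\in\N_0.
\]
Thus it suffices to establish these two scale-reduced statements, and I will do so directly from the sup/inf representations in \eqref{inf}.

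The single structural fact I will use is the multiplicative bound $M_{k+t}\leq A H^{k+t} M_k M_t$, valid for all $k,t\in\N_0$. This is exactly the equivalent form of (M.2) recalled in the preliminaries (Lemma 5.3 of \cite{PV84}): taking in $M_k\leq A H^k\min_{0\leq q\leq k}M_qM_{k-q}$ the index $k+t$ and the split $q=t$ gives the claim. Granting this, part (ii) follows by bounding each term of the sup: for every $k\in\N_0$,
\[
\frac{u^{k+t}}{M_k} = \frac{(Hu)^{k+t}}{M_{k+t}}\cdot\frac{M_{k+t}}{H^{k+t}M_k} \leq A M_t\,\frac{(Hu)^{k+t}}{M_{k+t}} \leq A M_t \exp\{M(Hu)\},
\]
where the last step uses $\exp\{M(Hu)\}=\sup_j (Hu)^j/M_j$. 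Taking the supremum over $k$ on the left and recalling $u^{t}\exp\{M(u)\}=\sup_k u^{k+t}/M_k$ yields (ii).

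For part (i) I will work with the infimum representation together with the elementary bound $u^{j}\exp\{-M(u)\}\leq M_j$, which is immediate from $\exp\{-M(u)\}=\inf_l M_l/u^l\leq M_j/u^j$. Applying it with $j=k+t$ and then using the multiplicative bound gives, for every $k\in\N_0$,
\[
u^{k+t}\exp\{-M(u)\} \leq M_{k+t} \leq A H^{k+t} M_k M_t,
\]
so that, after dividing by $u^{k}$, one obtains $u^{t}\exp\{-M(u)\}\leq A H^{t}M_t\,(H^{k}M_k/u^{k})$ for every $k$. Taking the infimum over $k$ of the right-hand factor and recognizing $\inf_k H^kM_k/u^k=\inf_k M_k/(H^{-1}u)^k=\exp\{-M(H^{-1}u)\}$ delivers (i). The only genuinely non-routine point is the multiplicative estimate $M_{k+t}\leq A H^{k+t}M_kM_t$; everything else is bookkeeping with the definitions in \eqref{inf}. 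Since that estimate is already available as the restated form of (M.2), there is no real obstacle, and I would simply cite it and carry out the two term-by-term comparisons above.
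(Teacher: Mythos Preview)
Your proof is correct and follows essentially the same route as the paper's. The only cosmetic differences are that you first normalize to $s=1$ via the substitution $u=sr$, and in part (i) you index over $k\geq 0$ with $M_{k+t}\leq AH^{k+t}M_kM_t$ whereas the paper indexes over $k\geq t$ with $M_k\leq AH^kM_tM_{k-t}$; after the obvious shift these are the same term-by-term estimates against the sup/inf representations of $e^{\pm M}$.
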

\begin{proof}
	$(i)$ Let $r,s,t >0 $. We have
	$$
	r^t \exp\{-M(sr) \} \leq r^t\frac{M_k}{s^k r^k} = s^{-t} \frac{M_k}{(sr)^{k-t}}, \quad \forall k \geq t.
	$$
	Since $M_k \leq AH^kM_t M_{k-t}$, for all $k \geq t$, we obtain
	$$
	r^t \exp\{-M(sr) \} \leq As^{-t}H^kM_t \frac{M_{k-t}}{(sr)^{k-t}} = A(s^{-1}H)^t M_t \frac{M_{k-t}}{(H^{-1}sr)^{k-t}}, \quad \forall k \geq t,
	$$
	Therefore 
	$$
	r^t\exp\{-M(sr) \} \leq A \left(Hs^{-1}\right)^{t} M_t \exp\{-M(H^{-1}sr) \}.
	$$
	
	$(ii)$ Let $r,s,t>0$. We have
	$$
	r^t \exp\{M(sr) \}  = r^t \sup_{k\in\N_0} \frac{(sr)^k}{M_k} = \sup_{k\in \N_0} \frac{s^{k} r^{k+t}}{M_k} = s^{-t} \sup_{k \in \N_0} \frac{(sr)^{k+t}}{M_{k}} 
	$$
	Since $M_{k+t} \leq AH^{k+t} M_kM_t$, we obtain
	$$
	r^t \exp\{M(sr) \}  \leq As^{-t}M_t \sup_{k\in \N_0} \frac{(Hsr)^{k+t}}{M_{k+t}} \leq As^{-t}M_t \sup_{\ell \in \N_0} \frac{(Hsr)^\ell}{M_\ell}.
	$$
	Therefore 
	$$	
	r^t\exp\{M(sr) \} \leq A s^{-t} M_t \exp\{M(Hsr) \}.
	$$
\end{proof}
\begin{thm}\label{partialroumieu}
	Let $G_1$ and $G_2$ be compact Lie groups, set $G=G_1\times G_2$, and let $f \in C^\infty(G)$. Then $f \in \Gamma_{\{M_k\}}(G)$ if and only if $\widehat{f}(\: \cdot\:, \eta)_{rs} \in \Gamma_{\{M_k\}}(G_1)$ for every $[\eta] \in \widehat{G_2}$, $1 \leq r,s \leq d_\eta$ and there exist $h,C,\varepsilon>0$ such that
\begin{equation}\label{hyporoumieupart}
	\max_{x_1 \in G_1} |\partial^\alpha \widehat{f}(x_1,\eta)_{rs}| \leq Ch^{|\alpha|}  M_{|\alpha|} \exp\{-M(\varepsilon\jp{\eta}) \},
\end{equation}
	for all $ [\eta] \in \widehat{G_2}, \ 1 \leq r,s \leq d_\eta$  and  $\alpha \in \N_0^{d_1}.$
\end{thm}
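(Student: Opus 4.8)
Throughout write $g_{\eta,r,s}(x_1):=\widehat{f}(x_1,\eta)_{rs}\in C^\infty(G_1)$ and recall that its $G_1$-Fourier coefficients are exactly the mixed coefficients $\doublehat{f}(\xi,\eta)_{mn_{rs}}$. Thus both sides of the asserted equivalence are reformulations of the Fourier characterization \eqref{roum}, and the proof is a dictionary between the two-index decay in \eqref{roum} and the ``$\partial^\alpha$ in $x_1$, $M$-decay in $\eta$'' estimate \eqref{hyporoumieupart}, carried out with Propositions \ref{prop2} and \ref{propM3}, with \eqref{komine}, and with the summability bound \eqref{propM1}. For necessity, assume $f\in\Gamma_{\{M_k\}}(G)$ and take $N,C$ from \eqref{roum}. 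Since $\jp{\eta}\ge 1$ and $M$ is nondecreasing, $|\doublehat{f}(\xi,\eta)_{mn_{rs}}|\le C\exp\{-M(N\jp{\xi})\}$, so by the single-group Fourier characterization of $\Gamma_{\{M_k\}}(G_1)$ each $g_{\eta,r,s}$ lies in $\Gamma_{\{M_k\}}(G_1)$; this is the first assertion.

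For the quantitative bound \eqref{hyporoumieupart} I would expand $g_{\eta,r,s}$ in its Fourier series on $G_1$ and differentiate term by term. Writing $\partial^\alpha\xi(x_1)=\xi(x_1)\,\sigma^{(\alpha)}(\xi)$, where $\sigma^{(\alpha)}(\xi)$ is the corresponding product of the symbols $\sigma_{X_j}(\xi)$, the bound $\|\sigma_{X_j}(\xi)\|_{\mathrm{op}}\le\jp{\xi}$ from \eqref{symbol} and $|\xi(x_1)_{mn}|\le 1$ give $|\partial^\alpha\xi(x_1)_{mn}|\le\jp{\xi}^{|\alpha|}$; together with $d_\xi\le C\jp{\xi}^{\dim G_1/2}$ this yields
\[
\max_{x_1\in G_1}\bigl|\partial^\alpha\widehat{f}(x_1,\eta)_{rs}\bigr|\le C\sum_{[\xi]\in\widehat{G_1}}d_\xi^{3}\,\jp{\xi}^{|\alpha|}\exp\{-M(N(\jp{\xi}+\jp{\eta}))\}.
\]
The heart of the argument is to extract from this series a clean factor $\exp\{-M(\varepsilon\jp{\eta})\}$ and a factor $h^{|\alpha|}M_{|\alpha|}$ while keeping the $[\xi]$-sum convergent. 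By monotonicity I split $\exp\{-M(N(\jp{\xi}+\jp{\eta}))\}\le\exp\{-\tfrac12M(N\jp{\xi})\}\exp\{-\tfrac12M(N\jp{\eta})\}$; the $\eta$-half becomes $\exp\{-M(\varepsilon\jp{\eta})\}$ by \eqref{komine} (with $\varepsilon=N/H$) and factors out. In the $\xi$-half I apply \eqref{komine} once more and then Proposition \ref{propM3}(i) to trade $\jp{\xi}^{|\alpha|}$ for a constant times $h^{|\alpha|}M_{|\alpha|}$, leaving a residual $\exp\{-M(c\jp{\xi})\}$; finally \eqref{propM1}, applied with an exponent large enough to dominate $d_\xi^{3}$ and to produce $\jp{\xi}^{-n}$-decay, shows the remaining $\xi$-series is bounded uniformly in $\alpha,\eta$. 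This is \eqref{hyporoumieupart}.

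For sufficiency only \eqref{hyporoumieupart} is used. Since $\xi_{nm}$ is an eigenfunction of $\mathcal{L}_{G_1}$, one has $\jp{\xi}^{2k}\doublehat{f}(\xi,\eta)_{mn_{rs}}=\widehat{(I-\mathcal{L}_{G_1})^k g_{\eta,r,s}}(\xi)_{mn}$, and bounding the right-hand side by the $L^\infty$-norm of $(I-\mathcal{L}_{G_1})^k g_{\eta,r,s}$ (a differential operator of order $2k$), expanding into the $\partial^\alpha$ with $|\alpha|\le 2k$, and inserting \eqref{hyporoumieupart} gives, for all $k$,
\[
\jp{\xi}^{2k}\bigl|\doublehat{f}(\xi,\eta)_{mn_{rs}}\bigr|\le C\,(C'h)^{2k}M_{2k}\exp\{-M(\varepsilon\jp{\eta})\}\le CA\,B^{k}M_k^{2}\exp\{-M(\varepsilon\jp{\eta})\},
\]
where the last step uses (M.2) and $B=(C'hH)^2$. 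Taking the infimum over $k$ and using \eqref{inf} collapses $B^{k}M_k^{2}\jp{\xi}^{-2k}$ into $\exp\{-2M(\jp{\xi}/\sqrt{B})\}$, hence $|\doublehat{f}(\xi,\eta)_{mn_{rs}}|\le CA\exp\{-2M(\jp{\xi}/\sqrt B)\}\exp\{-M(\varepsilon\jp{\eta})\}$. Discarding one copy of the $\xi$-exponential, replacing both rates by $\beta=\min\{\varepsilon,1/\sqrt B\}$ via monotonicity, and merging the two exponentials by Proposition \ref{prop2}(i) yields $|\doublehat{f}(\xi,\eta)_{mn_{rs}}|\le CA\exp\{-M(\tfrac{\beta}{2}(\jp{\xi}+\jp{\eta}))\}$, which is \eqref{roum} with $N=\beta/2$; therefore $f\in\Gamma_{\{M_k\}}(G)$.

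I expect the main obstacle to be the necessity direction, namely the simultaneous bookkeeping of the three competing demands placed on the single factor $\exp\{-M(N(\jp{\xi}+\jp{\eta}))\}$: it must supply the $\eta$-decay $\exp\{-M(\varepsilon\jp{\eta})\}$, absorb the $\alpha$-dependent polynomial $\jp{\xi}^{|\alpha|}$ into $h^{|\alpha|}M_{|\alpha|}$, and still leave enough $\xi$-decay for convergence, all with $h$ and $\varepsilon$ independent of $\alpha$ and $\eta$. Tracking how the factor $H$ degrades the decay rate at each use of \eqref{komine} and Proposition \ref{propM3}, and checking that the final residual rate $c$ stays positive, is the delicate point.
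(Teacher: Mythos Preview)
Your sufficiency argument ($\Longleftarrow$) matches the paper's almost exactly: both apply powers of the $G_1$-Laplacian (the paper uses $\mathcal{L}_{G_1}^\alpha$ and $\nu_{[\xi]}$ directly rather than $(I-\mathcal{L}_{G_1})^k$), invoke (M.2) to pass from $M_{2k}$ to $M_k^2$, take the infimum over $k$ to produce $\exp\{-2M(\cdot)\}$, discard one copy, and merge the two exponentials via Proposition~\ref{prop2}(i).

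The necessity direction ($\Longrightarrow$), however, follows a genuinely different route. The paper does not go through the Fourier characterization \eqref{roum} at all; instead it invokes the $L^\infty$--derivative description of $\Gamma_{\{M_k\}}(G)$ (Theorem~2.3 of \cite{DR16}), namely $|\partial_1^\alpha\partial_2^\beta f|\le Ch^{|\alpha|+|\beta|}M_{|\alpha|+|\beta|}$, and then applies powers of the $G_2$-Laplacian inside the $\eta$-integral to obtain $\nu_{[\eta]}^\beta|\partial_1^\alpha\widehat{f}(x_1,\eta)_{rs}|\le Cd_2^\beta h^{|\alpha|+2\beta}M_{|\alpha|+2\beta}$. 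Two uses of (M.2) separate $M_{|\alpha|+2\beta}$ into $M_{|\alpha|}\cdot M_\beta^2$ (up to geometric factors), and taking the infimum over $\beta$ yields the $\exp\{-M(\varepsilon\jp{\eta})\}$ factor directly; no $\xi$-series ever enters. Your route---reconstructing $\partial^\alpha g_{\eta,r,s}$ from its $G_1$-Fourier series, splitting the joint exponential via monotonicity and \eqref{komine}, trading $\jp{\xi}^{|\alpha|}$ for $h^{|\alpha|}M_{|\alpha|}$ via Proposition~\ref{propM3}(i), and then summing over $[\xi]$ using \eqref{propM1}---is correct (the bookkeeping you flag as the obstacle does close, with all residual rates positive and independent of $\alpha,\eta$), but it is heavier: you must control series convergence and track the degradation of the decay rate through three successive steps. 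The paper's approach buys a shorter proof with no convergent-sum argument; yours buys a proof that stays entirely on the Fourier side and never appeals to the derivative characterization from \cite{DR16}. One minor correction: the entrywise bound on $\partial^\alpha\xi_{mn}$ should carry a constant $C_0^{|\alpha|}$ (cf.\ the estimate quoted in the proof of Theorem~\ref{ultraroumieu}), but this is harmlessly absorbed into $h$.
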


\begin{proof}
	
	$(\impliedby)$ Let $\alpha \in \N_0$. Recall that $-\nu_{[\xi]}$ is the eigenvalue of the Laplacian operator $\mathcal{L}_{G_1}$ associated to the eigenfunctions $\{\xi_{mn}, \ 1 \leq m,n \leq d_\xi\}$. By \eqref{symbollaplacian}, we obtain
	\begin{align}
	\nu_{[\xi]}^\alpha |\doublehat{\,f\,}\!(\xi,\eta)_{rs_{mn}}| &= \left|\widehat{\mathcal{L}_{G_1}^\alpha{\widehat{f}}}(\xi,\eta)_{rs_{mn}} \right|\nonumber \\ &= \left|\int_{G_1} \mathcal{L}_{G_1}^\alpha\widehat{f}(x_1,\eta)_{rs} \overline{\xi(x_1)_{nm}} \, dx_1 \right| \nonumber \\
	&\leq \int_{G_1}  |\mathcal{L}_{G_1}^\alpha\widehat{f}(x_1,\eta)_{rs}| |{\xi(x_1)_{nm}}|\, dx_1\nonumber \\
	&\leq \left(\int_{G_1}  |\mathcal{L}_{G_1}^\alpha\widehat{f}(x_1,\eta)_{rs}|^2\, dx_1\right)^{1/2} \left(\int_G |{\xi(x_1)_{nm}}|^2\, dx_1 \right)^{1/2}\nonumber.
	\end{align}
	Notice that, by \eqref{ortho}, we have $\|\xi_{nm}\|_{L^2(G_1)} \leq 1$, for all $[\xi]\in \widehat{G_1}$. Moreover, we can write $\mathcal{L}_{G_1}^\alpha$ as a sum of $d_1^\alpha$ derivatives of order $2\alpha$,	where $d_1=\dim G_1$. So, by \eqref{hyporoumieupart}, we obtain 
	$$
	\nu_{[\xi]}^\alpha |\doublehat{\,f\,}\!(\xi,\eta)_{rs_{mn}}| \leq Cd_1^\alpha h^{2\alpha}M_{2\alpha}\exp\{-M(\varepsilon\jp{\eta})\}. 
	$$
	By definition of $\jp{\xi}$, there exists $C>0$ such that $\jp{\xi}^2\leq C \nu_{[\xi]}$,
	for all non-trivial representations. By the property (M.2) of the sequence $\{ M_k\}$, we have $M_{2\alpha} \leq AH^{2\alpha} M_\alpha^2$. Thus
	$$
	|\doublehat{\,f\,}\!(\xi,\eta)_{rs_{mn}}| \leq C(\sqrt{d_1}hH)^{2\alpha} \jp{\xi}^{-2\alpha}  M_\alpha^2 \exp\{-M(\varepsilon\jp{\eta}) \}, \quad \forall \alpha \in \N_0 .
	$$
	Hence, 
	\begin{align}
	|\doublehat{\,f\,}\!(\xi,\eta)_{rs_{mn}}| &\leq C \left(\inf_{\alpha\in \N_0} \frac{M_\alpha}{(\jp{\xi}(\sqrt{d_1}hH)^{-1})^\alpha} \right)^2 \exp\{-M(\varepsilon\jp{\eta}) \nonumber \\
	&= C \exp\{-2M((\sqrt{d_1}hH)^{-1}\jp{\xi})\}  \exp\{-M(\varepsilon\jp{\eta}) \} \nonumber\\
	&\leq C \exp\{-M((\sqrt{d_1}hH)^{-1}\jp{\xi})\}  \exp\{-M(\varepsilon\jp{\eta}) \} \nonumber.
	\end{align}
	Set $2N=\min\{(\sqrt{d_1}hH)^{-1}, \varepsilon \}$. In this way, we get
	$$
	|\doublehat{\,f\,}\!(\xi,\eta)_{rs_{mn}}| \leq C \exp\{-M(2N\jp{\xi})\}  \exp\{-M(2N\jp{\eta}) \}
	$$
	and by Proposition \ref{prop2},
	$$
	|\doublehat{\,f\,}\!(\xi,\eta)_{rs_{mn}}| \leq C \exp\{-M({N}(\jp{\xi}+\jp{\eta}))\}, 
	$$
	for all $[\xi] \in \widehat{G_1}$ non-trivial, $[\eta] \in \widehat{G_2}$. It is easy to see that we can also  obtain this inequality for the trivial representation of $G_2$ from the hypothesis. 
	Therefore $f \in \Gamma_{\{M_k\}}(G)$.
	
	$(\implies)$ We can characterize the elements of $\Gamma_{\{M_k\}}(G)$ as follows (Theorem 2.3 of \cite{DR16}):
	$\varphi \in \Gamma_{\{M_k\}}(G)$ if and only if there exist $C,h>0$ such that
	$$
	\max_{(x_1,x_2)\in G}|\partial_1^\alpha \partial_2^\beta \varphi(x_1,x_2)| \leq Ch^{|\alpha|+|\beta|}M_{|\alpha|+|\beta|},
	$$
	for all $\alpha \in \N_0^{d_1}, \beta \in \N_0^{d_2}$.
	
	For $f \in \Gamma_{\{M_k\}}(G)$ we have
	\begin{align}
	\nu_{[\eta]}^\beta|\partial_1^\alpha{\widehat{f}}(x_1,\eta)_{rs}| &= |\partial_1^\alpha\widehat{ \mathcal{L}_{G_2}^\beta f}(x_1,\eta)_{rs}| \nonumber \\
	&\leq \int_{G_2}|\partial_1^\alpha\mathcal{L}_{G_2}^\beta  f(x_1,x_2)| |\overline{\eta(x_2)_{sr}}| \, dx_2 \nonumber \\
	&\leq \left(\int_{G_2} |\partial_1^\alpha\mathcal{L}_{G_2}^\beta  f(x_1,x_2)|^2 \, dx_2\right)^{1/2} \left(\int_{G_2} |\eta(x_2)_{sr}|^2 \, dx_2 \right)^{1/2} \nonumber  \\
	&\leq \frac{1}{\sqrt{d_\eta}}\sum_{|\gamma|=2\beta} \max_{(x_1,x_2) \in G}|\partial_1^\alpha \partial_2^\gamma f(x_1,x_2)| \nonumber \\
	&\leq Cd_2^\beta h^{|\alpha|+2\beta}M_{|\alpha|+2\beta}, \nonumber 
	\end{align}
	where $d_2=\dim G_2$. Thus, when $[\eta]$ is not trivial we obtain
	\begin{align}
	|\partial_1^\alpha{\widehat{f}}(x_1,\eta)_{rs}|&\leq Cd_2^\beta h^{|\alpha|+2\beta}M_{|\alpha|+2\beta} \jp{\eta}^{-2\beta} \nonumber \\
	&\leq  C h^{|\alpha|+2\beta} AH^{|\alpha|+2\beta}M_{|\alpha|} h^{2\beta} d_2^\beta M_{2\beta} \jp{\eta}^{-2\beta} \nonumber \\
	&\leq C(hH)^{|\alpha|}M_{|\alpha|}h^{2\beta} d_2^\beta H^{4\beta}M_{\beta}^2 \jp{\eta}^{-2\beta} \nonumber \\
	&\leq C(hH)^{|\alpha|} M_{|\alpha|} \exp\{-2M((\sqrt{d_2}hH^2)^{-1}\jp{\eta})\} \nonumber \\
	&\leq C(hH)^{|\alpha|} M_{|\alpha|} \exp\{-M((\sqrt{d_2}hH^2)^{-1}\jp{\eta})\} \nonumber.
	\end{align}
	Put $h'= hH$ and $\varepsilon = (\sqrt{d_2}hH^2)^{-1}$ to obtain
	$$
	\max_{x_1 \in G_1} |\partial_1^\alpha \widehat{f}(x_1,\eta)_{rs}| \leq C{h'}^{|\alpha|}  M_{|\alpha|} \exp\{-M(\varepsilon\jp{\eta}) \}, 
	$$
for all non-trivial $[\eta] \in \widehat{G_2}, \ 1 \leq r,s \leq d_\eta, \  \alpha \in \N_0^n$.

	For $[\eta] = [\mathds{1}_{G_2}]$ we have
	\begin{align}
	|\partial_1^\alpha \widehat{f}(x_1,\mathds{1}_{G_2})| &= \left| \int_{G_2} \partial_1^\alpha f(x_1,x_2) \, dx_2\right| \nonumber \\
	&\leq |\partial_1^\alpha f(x_1,x_2)| \nonumber \\
	&\leq Ch^{|\alpha|}M_{|\alpha|} \nonumber.
	\end{align}
	In this way, adjusting $C$ if necessary, we obtain
	$$
	|\partial_1^\alpha \widehat{f}(x_1,\mathds{1}_{G_2})|  \leq Ch^{|\alpha|}M_{|\alpha|} \exp\{-M(\varepsilon\jp{\mathds{1}_{G_2}}) \},
	$$
	and so the proof is complete.
\end{proof}

\begin{thm}\label{partialbeurling}
	Let $G_1$ and $G_2$ be compact Lie groups, set $G=G_1\times G_2$, and let $f \in C^\infty(G)$. Then $f \in \Gamma_{(M_k)}(G)$ if and only if $\widehat{f}(\:\cdot\:, \eta)_{rs} \in \Gamma_{(M_k)}(G_1)$ for every $[\eta] \in \widehat{G_2}$, $1 \leq r,s \leq d_\eta$ and for all $h>0$ and $\varepsilon>0$ there exists $C_{h\varepsilon}>0$ such that we have
	$$
	\max_{x_1 \in G_1} |\partial^\alpha \widehat{f}(x_1,\eta)_{rs}| \leq C_{h\varepsilon}h^{|\alpha|}  M_{|\alpha|} \exp\{-M(\varepsilon\jp{\eta}) \},
	$$
	for all $ [\eta] \in \widehat{G_2}, \ 1 \leq r,s \leq d_\eta$  and  $\alpha \in \N_0^{d_1}.$
\end{thm}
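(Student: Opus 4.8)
The plan is to adapt the proof of Theorem \ref{partialroumieu} almost verbatim, the only genuine change lying in the handling of the quantifiers. In the Roumieu setting the parameters $h$, $\varepsilon$ and $N$ are existentially quantified, so one produces specific constants from given ones; in the present Beurling setting they are universally quantified, so in each direction one must match an \emph{arbitrary} target constant by choosing the free source constant sufficiently small. The analytic core is unchanged and will be invoked rather than rederived: the Laplacian trick together with \eqref{symbollaplacian}, the Cauchy--Schwarz estimate and $\|\xi_{nm}\|_{L^2}\leq 1$, the properties (M.1)--(M.2) of $\{M_k\}$, the identity $\exp\{-M(r)\}=\inf_k M_k/r^k$ from \eqref{inf}, the monotonicity of $M$, and Proposition \ref{prop2}(i).

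For sufficiency $(\impliedby)$, I would fix an arbitrary $N>0$ and aim at the characterization \eqref{caracbeurling}. Writing $\mathcal{L}_{G_1}^\alpha$ as a sum of $d_1^\alpha$ derivatives of order $2\alpha$ and using the hypothesis with parameters $h,\varepsilon$ and constant $C_{h\varepsilon}$, the same chain of estimates as in Theorem \ref{partialroumieu} — together with $\langle\xi\rangle^2\leq C\nu_{[\xi]}$, the bound $M_{2\alpha}\leq AH^{2\alpha}M_\alpha^2$, and the infimum over $\alpha$ — yields
$$|\doublehat{f}(\xi,\eta)_{rs_{mn}}| \leq C_{h\varepsilon}\,\exp\{-M((\sqrt{d_1}hH)^{-1}\langle\xi\rangle)\}\exp\{-M(\varepsilon\langle\eta\rangle)\}.$$
Since $h$ and $\varepsilon$ are at my disposal, I would set $\varepsilon:=2N$ and $h:=(2N\sqrt{d_1}H)^{-1}$, so that both exponents dominate $M(2N\langle\xi\rangle)$ and $M(2N\langle\eta\rangle)$ by monotonicity of $M$; Proposition \ref{prop2}(i) then collapses the product into $\exp\{-M(N(\langle\xi\rangle+\langle\eta\rangle))\}$, with $C_N:=C_{h\varepsilon}$. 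The trivial representation of $G_2$ is handled directly from the hypothesis, as in the Roumieu case.

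For necessity $(\implies)$, I would start from the Beurling analogue of Theorem 2.3 of \cite{DR16}, namely that $f\in\Gamma_{(M_k)}(G)$ iff for every $h>0$ there is $C_h$ with $\max|\partial_1^\alpha\partial_2^\beta f|\leq C_h h^{|\alpha|+|\beta|}M_{|\alpha|+|\beta|}$. Fixing arbitrary target parameters $h'>0$ and $\varepsilon>0$, I would reproduce the estimate for $\nu_{[\eta]}^\beta|\partial_1^\alpha\widehat{f}(x_1,\eta)_{rs}|$ via $\mathcal{L}_{G_2}^\beta$, then apply $M_{|\alpha|+2\beta}\leq AH^{|\alpha|+2\beta}M_{|\alpha|}M_{2\beta}$ followed by (M.2) and take the infimum over $\beta$, arriving for non-trivial $[\eta]$ at
$$|\partial_1^\alpha\widehat{f}(x_1,\eta)_{rs}| \leq C_h (hH)^{|\alpha|}M_{|\alpha|}\exp\{-M((\sqrt{d_2}hH^2)^{-1}\langle\eta\rangle)\}.$$
To realize the prescribed $h'$ and $\varepsilon$ I would choose $h:=\min\{h'/H,\ (\varepsilon\sqrt{d_2}H^2)^{-1}\}$, which forces simultaneously $hH\leq h'$ and $(\sqrt{d_2}hH^2)^{-1}\geq\varepsilon$; monotonicity of $M$ then delivers the claimed bound with $C_{h'\varepsilon}:=C_h$. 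The case $[\eta]=[\mathds{1}_{G_2}]$ is settled by integrating in $x_2$ and absorbing the constant factor $\exp\{-M(\varepsilon)\}$ into the constant.

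The only point requiring genuine care — and the closest thing to an obstacle — is the bookkeeping that makes these choices of source constants consistent: one must check that, for each target ($N$ in one direction, $h'$ and $\varepsilon$ in the other), a single admissible source parameter can be selected so that all the inequalities line up at once. This is a finite minimum/compatibility verification rather than a new estimate, and everything else transfers mechanically from Theorem \ref{partialroumieu}.
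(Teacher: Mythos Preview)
Your proposal is correct and follows essentially the same approach as the paper: both directions recycle the intermediate estimates from Theorem \ref{partialroumieu} and then adjust the quantifier bookkeeping. The only cosmetic difference is that in the necessity direction the paper splits into two cases (according to whether $\ell\varepsilon<(\sqrt{d_2}H)^{-1}$ or not) to pick the source parameter $h$, whereas you take $h:=\min\{h'/H,(\varepsilon\sqrt{d_2}H^2)^{-1}\}$; these are equivalent, and your formulation is arguably cleaner.
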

\begin{proof}
	$(\impliedby)$ By the proof of Theorem \ref{partialroumieu}, we have
	$$
	|\doublehat{\,f\,}\!(\xi,\eta)_{rs_{mn}}|  \leq C_{h\varepsilon} \exp\{-M((\sqrt{d_1}hH)^{-1}\jp{\xi})\}  \exp\{-M(\varepsilon\jp{\eta}) \} .
	$$
	Given $N>0$, choose $h=\dfrac{1}{2\sqrt{d_1} NH}$ and $\varepsilon=2N$. So
	\begin{align}
	|\doublehat{\,f\,}\!(\xi,\eta)_{rs_{mn}}|  &\leq C_N \exp\{-M(2N\jp{\xi})\}\exp\{-M(2N\jp{\eta})\} \nonumber \\
	&\leq C_N \exp\{-M(N(\jp{\xi}+\jp{\eta})) \} \nonumber .
	\end{align}
	Therefore $f \in \Gamma_{(M_k)}(G)$.
	
	$(\implies)$ We can characterize the elements of $\Gamma_{\{M_k\}}(G)$ as follows (see \cite{DR16}):
	$\varphi \in \Gamma_{\{M_k\}}(G)$ if and only if for all $h>0$ there exists $C_h>0$ such that
	$$
	\max_{(x_1,x_2)\in G}|\partial_1^\alpha \partial_2^\beta \varphi(x_1,x_2)| \leq Ch^{|\alpha|+|\beta|}M_{|\alpha|+|\beta|},
	$$
	for all $\alpha \in \N_0^{d_1}, \beta \in \N_0^{d_2}$.
	Let $f \in \Gamma_{(M_k)}$. In the proof of Theorem \ref{partialroumieu} we have obtained
	$$
	|\partial_1^\alpha{\widehat{f}}(x_1,\eta)_{rs}| \leq C_h(hH)^{|\alpha|} M_{|\alpha|} \exp\{-M((\sqrt{n}hH^2)^{-1}\jp{\eta})\} .
	$$
	Given $\ell, \varepsilon >0$. If $\ell\varepsilon < (\sqrt{n}H)^{-1}$, take $h = \ell H^{-1}$. In this case,
	\begin{align}
	|\partial_1^\alpha{\widehat{f}}(x_1,\eta)_{rs}| &\leq C_{\ell \varepsilon }\ell^{|\alpha|} M_{|\alpha|} \exp\{-M((\sqrt{n}\ell H)^{-1}\jp{\eta})\} \nonumber \\
	&\leq C_{\ell \varepsilon }\ell^{|\alpha|} M_{|\alpha|} \exp\{-M((\varepsilon\jp{\eta})\}. \nonumber
	\end{align}
	If $\ell\varepsilon \geq (\sqrt{n}H)^{-1}$, take $h=(\sqrt{n}\varepsilon H^2)^{-1}$. So
	\begin{align}
	|\partial_1^\alpha{\widehat{f}}(x_1,\eta)_{rs}| &\leq C_{\ell \varepsilon }(\sqrt{n}\varepsilon H^2)^{-|\alpha|} M_{|\alpha|} \exp\{-M((\varepsilon\jp{\eta})\} \nonumber \\
	&\leq C_{\ell \varepsilon }\ell^{|\alpha|} M_{|\alpha|} \exp\{-M((\varepsilon\jp{\eta})\} ,\nonumber 
	\end{align}
	and the proof is complete.
\end{proof}

\begin{thm}\label{ultraroumieu}
	Let $G_1$ and $G_2$ be compact Lie groups, and set $G=G_1\times G_2$. Then $u\in \Gamma'_{\{M_k\}}(G)$ if and only if for all $\varepsilon,h>0$ there exists $C_{h\varepsilon}>0$ such that
	$$
	|\langle \widehat{u}(\: \cdot\: ,\eta)_{rs}, \varphi \rangle | \leq C_{h\varepsilon} \|\varphi\|_{h}\exp\{M(\varepsilon\jp{\eta}) \}, \quad \forall \varphi \in \Gamma_{M_k}(G_1),
	$$
	where $\|\varphi\|_{h}:=\sup\limits_{\alpha, x_1}\bigl|\partial^\alpha\varphi(x_1)\bigr|h^{-|\alpha|}M_{|\alpha|}^{-1}$.
\end{thm}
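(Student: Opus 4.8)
The plan is to deduce the statement from the double-Fourier-coefficient characterization \eqref{distrou} of $\Gamma'_{\{M_k\}}(G)$, by expanding the test function in its Fourier series on $G_1$ and reading the pairing $\jp{\widehat{u}(\,\cdot\,,\eta)_{rs},\varphi}$ coefficientwise. The key link is that, directly from the definition of the partial Fourier coefficients, taking $\varphi=\overline{\xi_{nm}}$ recovers $\jp{\widehat{u}(\,\cdot\,,\eta)_{rs},\overline{\xi_{nm}}}=\doublehat{\,u\,}\!(\xi,\eta)_{mn_{rs}}$, so both sides of the claimed equivalence are governed by the family $\{\doublehat{\,u\,}\!(\xi,\eta)_{mn_{rs}}\}$ that \eqref{distrou} controls.

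For the direction $(\Longrightarrow)$ I would first record the a priori bound on the Fourier coefficients of a test function: if $\|\varphi\|_h<\infty$, then by the integration-by-parts argument with $\mathcal{L}_{G_1}$ used in the proof of Theorem \ref{partialroumieu}, together with (M.2) and \eqref{inf}, one gets
\begin{equation*}
|\widehat{\varphi}(\xi)_{nm}|\leq C\,\|\varphi\|_h\,\exp\{-M(c\jp{\xi}/h)\}
\end{equation*}
for a constant $c>0$ depending only on $H$ and $d_1=\dim G_1$. Expanding $\varphi=\sum_{[\xi]}d_\xi\sum_{m,n}\widehat{\varphi}(\xi)_{nm}\,\xi_{mn}$ and pairing termwise gives
\begin{equation*}
|\jp{\widehat{u}(\,\cdot\,,\eta)_{rs},\varphi}|\leq\sum_{[\xi]}d_\xi\sum_{m,n}|\widehat{\varphi}(\xi)_{nm}|\,|\doublehat{\,u\,}\!(\xi,\eta)_{mn_{rs}}|.
\end{equation*}
Inserting \eqref{distrou} for a small parameter $N$ (to be chosen), I would separate the $\xi$- and $\eta$-dependence of $\exp\{M(N(\jp{\xi}+\jp{\eta}))\}$ into a product $\exp\{M(2N\jp{\xi})\}\exp\{M(2N\jp{\eta})\}$ (up to a factor $2$; this follows from \eqref{inf} and the elementary inequality $(\jp{\xi}+\jp{\eta})^k\le 2^k(\jp{\xi}^k+\jp{\eta}^k)$), and then take $N$ so small that $2N\le c/(2h)$. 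Splitting the decay factor as $\exp\{-M(c\jp{\xi}/h)\}=\exp\{-\tfrac12 M(c\jp{\xi}/h)\}\exp\{-\tfrac12 M(c\jp{\xi}/h)\}$, one half absorbs $\exp\{M(2N\jp{\xi})\}$ by monotonicity of $M$, while the other half, together with the polynomial factor $d_\xi^3$, renders the $\xi$-series convergent by \eqref{propM1} and the Weyl-type polynomial growth of $\widehat{G_1}$. Choosing $2N=\min\{\varepsilon,c/(2h)\}$ and using that $M$ is nondecreasing to replace $\exp\{M(2N\jp{\eta})\}$ by $\exp\{M(\varepsilon\jp{\eta})\}$ yields the required bound with constant $C_{h\varepsilon}$.

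For the direction $(\Longleftarrow)$ I would test against the matrix coefficients themselves. Choosing $\varphi=\overline{\xi_{nm}}$ gives $\doublehat{\,u\,}\!(\xi,\eta)_{mn_{rs}}=\jp{\widehat{u}(\,\cdot\,,\eta)_{rs},\overline{\xi_{nm}}}$, and the Roumieu seminorm of this test function is estimated from \eqref{symbol}: since each left-invariant first-order derivative sends $\xi_{nm}$ to a combination of matrix coefficients of $\xi$ with coefficients bounded by $\jp{\xi}$, one obtains $\|\partial^\alpha\overline{\xi_{nm}}\|_\infty\le C_1^{|\alpha|}\jp{\xi}^{|\alpha|}$ (up to polynomial factors in $d_\xi$), whence by \eqref{inf},
\begin{equation*}
\|\overline{\xi_{nm}}\|_h\le C\exp\{M(C_1\jp{\xi}/h)\}.
\end{equation*}
Feeding this into the hypothesis and applying Proposition \ref{prop2}(ii) to recombine $\exp\{M(C_1\jp{\xi}/h)\}\exp\{M(\varepsilon\jp{\eta})\}$ into a single $\exp\{M(H(C_1\jp{\xi}/h+\varepsilon\jp{\eta}))\}$, it suffices, given $N>0$, to choose $h\ge HC_1/N$ and $\varepsilon\le N/H$ so that the argument is dominated by $N(\jp{\xi}+\jp{\eta})$; monotonicity of $M$ then produces exactly \eqref{distrou}, so $u\in\Gamma'_{\{M_k\}}(G)$.

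The main obstacle is the convergence bookkeeping in $(\Longrightarrow)$: one must order the free parameter $N$ from \eqref{distrou} correctly against $h$ and $\varepsilon$, and split the associated-function factor so that part of it cancels the growth coming from $u$ while the remainder still dominates the polynomial factor $d_\xi^3$ summed over $\widehat{G_1}$. Making these parameter dependencies mutually consistent — and justifying the termwise pairing together with the equivalence of the $L^2$- and $L^\infty$-based seminorms implicit in $\|\cdot\|_h$ — is where the care lies; once the two displayed estimates are in place, both implications follow routinely.
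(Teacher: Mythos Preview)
Your argument is correct. The $(\Longleftarrow)$ direction coincides with the paper's $(\impliedby)$: both test the hypothesis against $\varphi=\overline{\xi_{nm}}$, estimate $\|\overline{\xi_{nm}}\|_h$ via the derivative bounds for matrix coefficients, and recombine the two exponential factors with Proposition~\ref{prop2}(ii) to land on \eqref{distrou}. (A small point: the polynomial factor in $\jp{\xi}$ that comes along with the derivative estimate has to be absorbed into the associated function; the paper does this explicitly via Proposition~\ref{propM3}(ii), and you should invoke it as well rather than leave it implicit.)

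The $(\Longrightarrow)$ direction, however, is genuinely different from the paper's. You start from the Fourier-coefficient characterization \eqref{distrou}, expand $\varphi$ in its Fourier series on $G_1$, and sum the coefficientwise bound; the paper instead uses directly the \emph{topological} description of $\Gamma'_{\{M_k\}}(G)$ as a dual space --- namely that for every $\ell>0$ there is $C_\ell$ with $|\jp{u,\psi}|\le C_\ell\sup_{\alpha,\beta}\ell^{|\alpha|+|\beta|}M_{|\alpha|+|\beta|}^{-1}\|\partial_1^\alpha\partial_2^\beta\psi\|_\infty$ --- and simply plugs in $\psi=\varphi\times\overline{\eta_{sr}}$, using $M_{|\alpha|}M_{|\beta|}\le M_{|\alpha|+|\beta|}$ to separate the seminorm into a $G_1$-factor $\|\varphi\|_{\ell^{-1}}$ and a $G_2$-factor controlled by $\exp\{M(H\ell C_0\jp{\eta})\}$. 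The paper's route avoids the termwise-pairing justification and the $\widehat{G_1}$-summability bookkeeping entirely; your route has the advantage of staying purely within the Fourier-coefficient framework set up in \eqref{distrou}, which is self-contained once one knows that the Fourier series of $\varphi$ converges in $\Gamma_{\{M_k\}}(G_1)$. Both are valid; the paper's is shorter.
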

\begin{proof}
	$(\impliedby)$ Let $\varphi = \overline{\xi_{nm}}$. We have $$|\partial^\beta \xi_{nm}(x_1)| \leq C C_0^{|\beta|} \jp{\xi}^{p+|\beta|},$$
	where $p$ is any natural number satisfying $p \geq \frac{\dim G}{2}$ (see \cite{DR14}). Then
	\begin{align*}
| \jp{\widehat{u}(\: \cdot \:, \eta)_{rs}, \overline{\xi_{nm}}}| &\leq C_{h\varepsilon}\|\overline{\xi_{nm}}\|_{h} \exp\{M(\varepsilon\jp{\eta}) \} \nonumber \\
&= C_{h\varepsilon}\sup_{\alpha, x_1} |\partial^\alpha \overline{\xi_{nm}}(x_1) h^{-|\alpha|} M_{|\alpha|}^{-1} | \exp\{M(\varepsilon\jp{\eta}) \} \nonumber \\
	&\leq C_{h\varepsilon} \jp{\xi}^p \sup_{\alpha} | C_0^{|\alpha|}\jp{\xi}^{|\alpha|} h^{-|\alpha|} M_{|\alpha|}^{-1}| \exp\{M(\varepsilon\jp{\eta}) \} \nonumber \\
	&= C_{h\varepsilon} \jp{\xi}^p \exp\{M(h^{-1}C_0 \jp{\xi}) \} \exp\{M(\varepsilon\jp{\eta}) \} \nonumber.
	\end{align*}
	
	By Proposition \ref{propM3}, we have $$\jp{\xi}^p \exp\{M(h^{-1}C_0 \jp{\xi}) \} \leq A(h^{-1}C_0)^{-p}M_p\exp\{M(Hh^{-1}C_0 \jp{\xi}) \}.$$ 
	
	By Proposition \ref{prop2}, we obtain
	$$
	| \jp{\widehat{u}(\: \cdot \:, \eta)_{rs}, \overline{\xi_{nm}}}|  \leq C_{h\varepsilon} \exp\{M(H( Hh^{-1}C_0 \jp{\xi} + \varepsilon\jp{\eta}) ) \}.
	$$
	
	Given $N>0$, choose $h = \frac{H^2C_0}{N}$ and $\varepsilon=\frac{N}{H}$. In this way,
	$$
	|\doublehat{\,u\,}\!(\xi,\eta)_{mn_{rs}}| \leq C_N \exp\{M(N(\jp{\xi}+\jp{\eta})) \},
	$$
	which implies that $u \in \Gamma_{\{M_k\}}'(G)$.
	
	$(\implies)$ Since $ u \in \Gamma_{\{M_k\}}'(G)$, for every $\ell>0$, there exists $C_\ell>0$ such that
	$$
	|\jp{u,\psi}| \leq C_\ell \sup_{\alpha, \beta} \ell^{|\alpha|+|\beta|} M_{|\alpha|+|\beta|}^{-1} ||\partial^\alpha_1\partial^\beta_2 \psi||_{L^\infty(G)},
	$$
	for all $\psi \in \Gamma_{\{M_k\}}(G)$. Given $\varphi \in \Gamma_{\{M_k\}}(G_1)$, take $\psi=\varphi \times \overline{\eta_{sr}}$. Then
	\begin{align*}
	|\jp{\widehat{u}(\: \cdot \:, \eta)_{rs},\varphi}| &= |\jp{u,\varphi \times \overline{\eta_{sr}}}|\\ &\leq C_\ell \sup_{\alpha, \beta} \ell^{|\alpha|+|\beta|} M_{|\alpha|+|\beta|}^{-1} \sup_{x_1} |\partial_1^\alpha \varphi(x_1)| \sup_{x_2} |\partial^\beta_2 \overline{\eta_{sr} (x_2)}| .\nonumber
	\end{align*}
	Similar to what was done above, we have
	$$
	\sup_{\beta, x_2} |\partial^\beta_2 \overline{\eta_{sr} (x_2)} \ell^{|\beta|} M_{|\beta|}^{-1}  | \leq C_\ell \exp\{M(H\ell C_0\jp{\eta}) \}.
	$$
	By the property $M_{|\alpha|} M_{|\beta|} \leq M_{|\alpha|+|\beta|}$ we obtain
	$$ 
	|\langle \widehat{u}(\: \cdot\: ,\eta)_{rs}, \varphi \rangle | \leq C_\ell \sup_{\alpha, x_1} |\partial^\alpha_1 \varphi(x_1) \ell^{|\alpha|} M_{|\alpha|}^{-1}| \exp\{M(H\ell C_0\jp{\eta}) \}.
	$$
	Given $h,\varepsilon>0$. If $\varepsilon h \leq C_0H$, take $\ell = \frac{\varepsilon}{C_0 H}$. Thus $\ell \leq h^{-1}$ and
	$$
	|\langle \widehat{u}(\: \cdot\: ,\eta)_{rs}, \varphi \rangle | \leq C_{h\varepsilon} \|\varphi\|_{h}\exp\{M(\varepsilon\jp{\eta}) \}.
	$$
	On the other hand, if $\varepsilon h > C_0H$, take $\ell = h^{-1}$. Thus $H\ell C_0 < \varepsilon$ and
	$$
	|\langle \widehat{u}(\: \cdot\: ,\eta)_{rs}, \varphi \rangle | \leq C_{h\varepsilon} \|\varphi\|_{h}\exp\{M(\varepsilon\jp{\eta}) \},
	$$
	completing the proof.
\end{proof}

\begin{thm}\label{ultrabeurling}
	Let $G_1$ and $G_2$ be compact Lie groups, and set $G=G_1\times G_2$ . Then  $u\in \Gamma'_{(M_k)}(G)$ if and only if there exist $\varepsilon,h,C>0$ such that we have
	$$
	|\langle \widehat{u}(\:\cdot\:,\eta)_{rs}, \varphi \rangle | \leq C \|\varphi\|_{h}\exp\{M(\varepsilon\jp{\eta}) \}, \quad \forall \varphi \in \Gamma_{(M_k)}(G_1).
	$$
\end{thm}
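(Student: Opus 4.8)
The plan is to prove both implications by transcribing the argument of Theorem \ref{ultraroumieu}, replacing the Roumieu quantifier ``for all $\varepsilon,h$'' by the existential ``there exist $\varepsilon,h$'' dictated by the Beurling setting. The structural point that makes the Beurling proof in fact shorter is that the duality estimate for $u\in\Gamma'_{(M_k)}(G)$ is existential: since $\Gamma_{(M_k)}(G)$ is the projective limit $\bigcap_{h>0}E_h$, continuity of $u$ forces a single pair $(\ell,C)$ with
\begin{equation*}
|\jp{u,\psi}| \leq C \sup_{\alpha,\beta} \ell^{|\alpha|+|\beta|} M_{|\alpha|+|\beta|}^{-1} \|\partial_1^\alpha\partial_2^\beta \psi\|_{L^\infty(G)}, \quad \forall \psi \in \Gamma_{(M_k)}(G).
\end{equation*}
This single $\ell$ will directly produce the required single pair $(\varepsilon,h)$, so none of the case analysis appearing at the end of the Roumieu proof is needed.

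For $(\impliedby)$ I would start from the hypothesis with its fixed $\varepsilon,h,C$ and test the functional against $\varphi=\overline{\xi_{nm}}$, using $\jp{\widehat{u}(\cdot,\eta)_{rs},\overline{\xi_{nm}}}=\doublehat{\,u\,}\!(\xi,\eta)_{mn_{rs}}$. The derivative bound $|\partial^\beta\xi_{nm}(x_1)|\leq CC_0^{|\beta|}\jp{\xi}^{p+|\beta|}$ with $p\geq\dim G/2$ gives
\begin{equation*}
\|\overline{\xi_{nm}}\|_{h} \leq C\jp{\xi}^p \exp\{M(h^{-1}C_0\jp{\xi})\},
\end{equation*}
after recognizing the supremum over $\beta$ as $\exp\{M(h^{-1}C_0\jp{\xi})\}$ by \eqref{inf}. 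Substituting this into the hypothesis, I would absorb the polynomial factor $\jp{\xi}^p$ using Proposition \ref{propM3}(ii) and then combine the two associated-function exponentials (in $\jp{\xi}$ and in $\jp{\eta}$) using Proposition \ref{prop2}(ii). Choosing $N=H\max\{Hh^{-1}C_0,\varepsilon\}$ and using that $M$ is non-decreasing then yields $|\doublehat{\,u\,}\!(\xi,\eta)_{mn_{rs}}|\leq C\exp\{M(N(\jp{\xi}+\jp{\eta}))\}$, which is exactly the Beurling characterization \eqref{distbeur}; hence $u\in\Gamma'_{(M_k)}(G)$.

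For $(\implies)$ I would invoke the existential duality estimate above with its fixed $\ell$ and specialize to $\psi=\varphi\times\overline{\eta_{sr}}$ with $\varphi\in\Gamma_{(M_k)}(G_1)$, so that $\jp{u,\psi}=\jp{\widehat{u}(\cdot,\eta)_{rs},\varphi}$. The derivatives factor as $\|\partial_1^\alpha\partial_2^\beta\psi\|_{L^\infty}=\sup_{x_1}|\partial_1^\alpha\varphi|\,\sup_{x_2}|\partial_2^\beta\overline{\eta_{sr}}|$, and the same representation-derivative estimate (via Propositions \ref{propM3} and \ref{prop2}) bounds $\sup_{\beta,x_2}|\partial_2^\beta\overline{\eta_{sr}}|\,\ell^{|\beta|}M_{|\beta|}^{-1}$ by $C\exp\{M(H\ell C_0\jp{\eta})\}$. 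The submultiplicativity $M_{|\alpha|}M_{|\beta|}\leq M_{|\alpha|+|\beta|}$ lets me split $M_{|\alpha|+|\beta|}^{-1}$ and pull the $\alpha$-supremum out as $\|\varphi\|_{\ell}$; setting $h=\ell$ and $\varepsilon=H\ell C_0$ then gives the desired single triple $(\varepsilon,h,C)$.

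Since the computation is essentially a transcription of Theorem \ref{ultraroumieu}, I do not expect a genuine obstacle; the one point requiring care is the quantifier bookkeeping, namely confirming that the Beurling duality furnishes one fixed $\ell$ rather than one per seminorm, which is precisely what collapses the Roumieu case distinction and lets the fixed constants propagate unchanged through both directions. I would also double-check that the identification $\jp{\widehat{u}(\cdot,\eta)_{rs},\overline{\xi_{nm}}}=\doublehat{\,u\,}\!(\xi,\eta)_{mn_{rs}}$ is consistent with the definitions of the partial and double partial Fourier coefficients, since that is the only place where an index or conjugation slip could occur.
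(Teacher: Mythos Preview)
Your proposal is correct and matches the paper's intended approach: the paper itself omits the proof, stating only that it is analogous to the Roumieu case (Theorem~\ref{ultraroumieu}), and your transcription with the quantifier swap is exactly that analogue, including your observation that the case split at the end of the Roumieu argument collapses in the Beurling setting. One small slip: in the $(\implies)$ direction you write $h=\ell$, but since $\|\varphi\|_h$ carries $h^{-|\alpha|}$ while the duality estimate produces $\ell^{|\alpha|}$, you need $h=\ell^{-1}$ (and then $\varepsilon=H\ell C_0$ is fine).
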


	The proof of this theorem is analogous to the Roumieu case and it will be omitted.

\section{Normal Form}\label{secnormal}

Let $G_1$  and $G_2$ be compact Lie groups and consider the operator $L_a$ defined on $G:=G_1\times G_2$ by
\begin{equation}\label{L-a}
L_a=X_1+a(x_1)X_2,
\end{equation}
where $X_1  \in \mathfrak{g}_1$, $X_2 \in \mathfrak{g}_2$, and $a \in \Gamma_{\{M_k\}}(G_1)$ is a real-valued function. For each $[\xi] \in \widehat{G_1}$, we can choose a representative $\xi \in \mbox{Rep}({G_1})$ such that
\begin{equation*}\label{symbol1}
\sigma_{X_1}(\xi)_{mn} =i\lambda_m(\xi) \delta_{mn}, \quad 1 \leq m,n \leq d_\xi,
\end{equation*}
where  $\lambda_m(\xi)\in\R$  for all $[\xi] \in\widehat{G_1}$ and $1 \leq m \leq d_\xi$.
Similarly, for each $[\eta] \in \widehat{G_2}$, we can choose a representative $\eta \in \mbox{Rep}({G_2})$ such that
\begin{equation*}\label{symbol2}
\sigma_{X_2}(\eta)_{rs} =i\mu_r(\eta) \delta_{rs}, \quad 1 \leq r,s \leq d_\eta,
\end{equation*}
where  $\mu_r(\eta)\in\R$  for all $[\eta] \in\widehat{G_2}$ and $1 \leq r \leq d_\eta$.

The idea is to apply the same technique used in \cite{BP99,Ber99,KMR19b} and several other references of studying the global properties of \eqref{L-a} by analyzing the same properties of the equivalent constant-coefficient  operator $L_{a_0} = X_1 + a_0 X_2$, where
$$a_0:=  \displaystyle\int_{G_1} a(x_1)\, dx_1.$$

For this end, we have the following additional hypothesis:

\begin{hyp}
For the real-valued function $a \in \Gamma_{\{M_k\}}(G_1)$ (respectively, $a \in \Gamma_{(M_k)}(G_1)$), there exists $A \in \Gamma_{\{M_k\}}(G_1)$ (respectively, $A \in \Gamma_{(M_k)}(G_1)$) such that 
\begin{equation}\label{A-function}
X_1 A(x_1) = a(x_1) - a_0,
\end{equation} 
for all $x_1 \in G_1$.
\end{hyp}

\begin{rem}\label{assumption}
	When $G_1$ is the one-dimensional torus, the operator $X_1=\partial_t$ is globally solvable and $a - a_0$ belongs to the set of admissible functions, therefore this hypothesis is satisfied. However, for other compact Lie groups, including higher-dimensional torus and the sphere $\St$, it is not difficult to construct examples of a function $a$ for which there is no $A$ satisfying \eqref{A-function}.  
\end{rem}

Now we define the operator $\Psi_a$ as
\begin{equation}\label{psialpha2}
\Psi_a   u(x_1,x_2) := \sum_{[\eta]\in \widehat{G_2}}d_\eta \sum_{r,s = 1}^{d_\eta} e^{i\mu_r(\eta)A(x_1)}\widehat{u}(x_1,\eta)_{rs}\,{\eta_{sr}(x_2)}.
\end{equation}

In \cite{KMR19b} it was proved that $\Psi_a$ is an automorphism of $C^\infty(G)$ and $\mathcal{D}'(G)$, with inverse $\Psi_{-a}$. Moreover, we have
\begin{equation}\label{conjugation}
\Psi_a\circ L_a = L_{a_0}\circ\Psi_a.
\end{equation} 

Since the operator $L_a$ is the same as in \cite{KMR19b}, the expression \eqref{conjugation} remains valid in Komatsu classes.

In the next results, we present sufficient conditions for the operator $\Psi_a$ to be an automorphism in the space of ultradifferentiable functions and ultradistributions of both Roumieu and Beurling types.
First, by the definition of ultradifferentiable functions, there exist $K',\ell'>0$ such that for all $\alpha \in \N_0^{d_1}$ we have
$$
|\partial^\alpha A(x_1)| \leq K' \ell'^{|\alpha|}M_{|\alpha| }, \quad \forall x_1 \in G_1.
$$

Since $M_{|\alpha|} \leq AH^{|\alpha|} M_1 M_{|\alpha|-1}$, we obtain for all non-zero $\alpha \in \N_0^{d_1}$ 
\begin{equation}\label{propertyA}
|\partial^\alpha A(x_1)| \leq K \ell^{|\alpha|-1}M_{|\alpha| -1}, \quad \forall x_1 \in G_1,
\end{equation}
where $K=K'\ell' HAM_1$ and $\ell = \ell'H$.

Similarly, if $A \in \Gamma_{(M_k)}(G_1)$, for any $\ell>0$ there exists $K_\ell>0$ such that for all non-zero $\alpha \in \N_0^{d_1}$ we have
\begin{equation}\label{propertyA2}
|\partial^\alpha A(x_1)| \leq K_\ell \ell^{|\alpha|-1}M_{|\alpha| -1}, \quad \forall x_1 \in G_1.
\end{equation}

\begin{prop}\label{automrou}
	Let $a\in \Gamma_{\{M_k\}}(G_1)$. Then the operator $\Psi_a$, defined in \eqref{psialpha2}, is an automorphism of $\Gamma_{\{M_k\}}(G_1\times G_2)$.
\end{prop}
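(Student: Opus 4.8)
The plan is to reduce everything to the partial-Fourier characterization of Theorem \ref{partialroumieu} together with the automorphism property already known on $C^\infty(G)$. First I would read off from the definition \eqref{psialpha2} that, for each $[\eta]\in\widehat{G_2}$ and $1\le r,s\le d_\eta$, the partial Fourier coefficient of $\Psi_a u$ is simply the product
\[
\widehat{\Psi_a u}(x_1,\eta)_{rs}=e^{i\mu_r(\eta)A(x_1)}\,\widehat{u}(x_1,\eta)_{rs}.
\]
By Theorem \ref{partialroumieu} it then suffices to verify, for $u\in\Gamma_{\{M_k\}}(G)$, that each coefficient above lies in $\Gamma_{\{M_k\}}(G_1)$ and that the family satisfies the uniform estimate \eqref{hyporoumieupart}. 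The first condition is immediate once the second is in hand (it is the case of fixed $\eta$, with the $\eta$-constant absorbed), so the whole weight of the argument falls on the uniform bound.

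The engine of that bound is an estimate for the derivatives of the exponential factor. Writing $\mu=\mu_r(\eta)$ and differentiating $e^{i\mu A}$ by the Faà di Bruno formula, each term is a product of derivatives of $A$ weighted by a power $(i\mu)^{p}$. Here I would use the bound \eqref{propertyA} on the derivatives of $A$ together with property (M.4) — precisely one of the two uses of (M.4) announced after its statement — to contract the products of the $M_{|\gamma|-1}$'s into a single $M$. The target is: \emph{for every} $\delta>0$ there exist $C_\delta,h_\delta>0$ with
\[
\max_{x_1\in G_1}\bigl|\partial^\beta e^{i\mu_r(\eta)A(x_1)}\bigr|\le C_\delta\,h_\delta^{|\beta|}\,M_{|\beta|}\,\exp\{M(\delta\jp{\eta})\},
\]
where I have used $|\mu_r(\eta)|\le\jp{\eta}$ from \eqref{symbol} and the monotonicity of $M$. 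The decisive feature, special to the Roumieu setting, is that $\delta$ may be taken arbitrarily small at the cost of enlarging $h_\delta$ and $C_\delta$; already the model case of a linear $A$, where $|\partial^\beta e^{i\mu A}|=|\mu|^{|\beta|}\le h_\delta^{|\beta|}M_{|\beta|}\exp\{M(\delta|\mu|)\}$ by the very definition \eqref{inf} of the associated function, shows why small $\delta$ forces large $h_\delta$.

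With this in place I would combine the two estimates by the Leibniz rule,
\[
\partial^\alpha\widehat{\Psi_a u}(x_1,\eta)_{rs}=\sum_{\beta\le\alpha}\binom{\alpha}{\beta}\,\partial^\beta e^{i\mu_r(\eta)A(x_1)}\,\partial^{\alpha-\beta}\widehat{u}(x_1,\eta)_{rs},
\]
using \eqref{hyporoumieupart} for $u$ (with some $h,C,\varepsilon>0$), the submultiplicativity $M_{|\beta|}M_{|\alpha-\beta|}\le M_{|\alpha|}$, and the binomial identity $\sum_{\beta\le\alpha}\binom{\alpha}{\beta}h_\delta^{|\beta|}h^{|\alpha-\beta|}=(h+h_\delta)^{|\alpha|}$, to reach
\[
\max_{x_1}\bigl|\partial^\alpha\widehat{\Psi_a u}(x_1,\eta)_{rs}\bigr|\le C'(h+h_\delta)^{|\alpha|}M_{|\alpha|}\exp\{M(\delta\jp{\eta})\}\exp\{-M(\varepsilon\jp{\eta})\}.
\]
Then I would choose $\delta$ so small that $H(\delta+\varepsilon')\le\varepsilon$ for some $\varepsilon'>0$ and invoke Proposition \ref{prop2}(ii) to dominate $\exp\{M(\delta\jp{\eta})\}\exp\{-M(\varepsilon\jp{\eta})\}$ by $A\exp\{-M(\varepsilon'\jp{\eta})\}$; this is exactly \eqref{hyporoumieupart} for $\Psi_a u$, whence $\Psi_a u\in\Gamma_{\{M_k\}}(G)$. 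The automorphism claim then follows formally: $-a$ satisfies the same hypotheses with primitive $-A\in\Gamma_{\{M_k\}}(G_1)$, so the identical argument shows $\Psi_{-a}$ preserves $\Gamma_{\{M_k\}}(G)$, and since $\Psi_a$ and $\Psi_{-a}$ are already mutually inverse on $C^\infty(G)$ (see \cite{KMR19b} and \eqref{conjugation}) they restrict to mutually inverse maps of $\Gamma_{\{M_k\}}(G)$.

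I expect the main obstacle to be the derivative estimate for $e^{i\mu_r(\eta)A}$ with an arbitrarily small constant $\delta$ in the factor $\exp\{M(\delta\jp{\eta})\}$. A naive application of Faà di Bruno produces either a spurious extra factorial (pushing the estimate into a strictly larger class) or an exponential $e^{c\jp{\eta}}$ that is linear in $\jp{\eta}$ and hence too large to be absorbed by $\exp\{-M(\varepsilon\jp{\eta})\}$ when $M$ grows sublinearly, as for Gevrey classes of order $>1$. Arranging the $\mu$-dependence to enter only through $\exp\{M(\delta\jp{\eta})\}$, with $\delta$ controllable, is the delicate point, and is exactly where (M.4) does its work.
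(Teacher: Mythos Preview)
Your proposal is correct and follows essentially the same route as the paper: the partial Fourier characterization of Theorem~\ref{partialroumieu}, the Leibniz rule, and Fa\`a di Bruno together with (M.4) to control $\partial^\beta e^{i\mu_r(\eta)A}$. The only tactical difference is that the paper never introduces the growing factor $\exp\{M(\delta\jp{\eta})\}$: it keeps the raw power $\jp{\eta}^{k}$ coming out of Fa\`a di Bruno and absorbs it directly into the decaying factor $\exp\{-M(\varepsilon\jp{\eta})\}$ via Proposition~\ref{propM3}(i), landing on $\exp\{-M(\varepsilon H^{-1}\jp{\eta})\}$ in one step, whereas you first estimate the exponential factor in isolation and then cancel the resulting small-$\delta$ growth against the $\varepsilon$-decay through Proposition~\ref{prop2}(ii).
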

\begin{proof}
It is enough to show that $\Psi_a   u \in \Gamma_{\{M_k\}}(G_1 \times G_2)$ when $u \in \Gamma_{\{M_k\}}(G_1 \times G_2)$.
By the characterization of ultradifferentiable functions of Roumieu type from their partial Fourier coefficients, there exist $C,h,\varepsilon>0$ such that
\begin{equation}\label{propu}
|\partial^\alpha \widehat{u}(x_1,\eta)_{rs}| \leq Ch^{|\alpha|}M_{|\alpha|} \exp\{-M(\varepsilon\jp{\eta}) \},
\end{equation}
for all $\alpha \in \N_0^{d_1}$, $x_1\in G_1$, $[\eta] \in \widehat{G_2}$ and $1\leq r,s \leq d_\eta$. Notice that
$$
\widehat{\Psi_{a}  u}(x_1,\eta)_{rs} = e^{i\mu_r(\eta)A(x_1)}\widehat{u}(x_1,\eta)_{rs}.
$$
Thus, for $\alpha \in \N_0^{d_1}$ we have
\begin{align*}
|\partial^\alpha \widehat{\Psi_{a}  u}(x_1,\eta)_{rs}| &= \left|\partial^\alpha \left(e^{i\mu_r(\eta)A(x_1)}\widehat{u}(x_1,\eta)_{rs}\right)\right| \leq \sum_{\beta \leq \alpha} \binom{\alpha}{\beta} \left|\partial^\beta e^{i\mu_r(\eta)A(x_1)}\right| \left|\partial^{\alpha-\beta }\widehat{u}(x_1,\eta)_{rs}\right|.
\end{align*}
Using that $|\mu_r(\eta)| \leq \jp{\eta}$ and \eqref{propertyA}, we have by Fa\`{a} di Bruno's Formula that
$$
|\partial^\beta e^{i\mu_r(\eta)A(x_1)}| \leq \sum_{k=1}^{|\beta|} K^k\jp{\eta}^k \ell^{|\beta|-k} \left( \sum_{\lambda \in \Delta({|\beta|},k)}\binom{{|\beta|}}{\lambda} \frac{1}{r(\lambda)!}\prod_{j=1}^k M_{\lambda_j-1} \right),
$$
where $\Delta({|\beta|},k) = \{\lambda\in\N^k; |\lambda|={|\beta|} \mbox{ and } \lambda_1\geq \cdots \geq \lambda_k \geq 1 \}$ and $r(\lambda) \in \N_0^{d_1}$, where $r(\lambda)_j$ counts how many times $j$ appears on $\lambda$. 

By property (M.4) of the sequence $\{M_k\}_{k\in\N_0}$ we obtain
\begin{equation}\label{1st-time}
\binom{|\beta|}{\lambda}\prod_{j=1}^{|\beta|} M_{\lambda_j-1}= |\beta|! \prod_{j=1}^{|\beta|}\frac{M_{\lambda_j-1}}{\lambda_j!}\leq |\beta|! \prod_{j=1}^{|\beta|}\frac{M_{\lambda_j-1}}{(\lambda_j-1)!} \leq |\beta|! \frac{M_{|\beta|-k}}{(|\beta|-k)!},
\end{equation}
for $\lambda \in \Delta(|\beta|,k)$. Using the fact that $$\sum\limits_{\lambda \in \Delta({|\beta|},k)} \frac{1}{r(\lambda)!} = \binom{|\beta|-1}{k-1}\frac{1}{k!},$$ we have
\begin{equation}\label{propexp}
|\partial^\beta e^{i\mu_r(\eta)A(x_1)}| \leq \sum_{k=1}^{|\beta|} \binom{|\beta|-1}{k-1}\frac{1}{k!} K^k\jp{\eta}^k \ell^{|\beta|-k} |\beta|! \frac{M_{|\beta|-k}}{(|\beta|-k)!}.
\end{equation}
By \eqref{propu}, we have
\begin{align*}
|\partial^\alpha \widehat{\Psi_{a}  u}(x_1,\eta)_{rs}| &\leq C\sum_{\beta \leq \alpha} \binom{\alpha}{\beta}\sum_{k=1}^{|\beta|} \binom{|\beta|-1}{k-1}\frac{1}{k!} K^k\jp{\eta}^k \ell^{|\beta|-k} \\ &\quad \times|\beta|! \frac{M_{|\beta|-k}}{(|\beta|-k)!}  h^{|\alpha|-|\beta|} M_{|\alpha|-|\beta|} \exp\{-M(\varepsilon\jp{\eta}) \}.
\end{align*}
By Proposition \ref{propM3},
$$
\jp{\eta}^{k }\exp\{-M(\varepsilon\jp{\eta}) \} \leq A\left(\frac{H}{\varepsilon}\right)^{k } M_{k }\exp\{-M(\varepsilon H^{-1} \jp{\eta}) \}.
$$
So,
\begin{align*}
	|\partial^\alpha \widehat{\Psi_{a}  u}(x_1,\eta)_{rs}| &\leq AC\sum_{\beta \leq \alpha} \binom{\alpha}{\beta}\sum_{k=1}^{|\beta|} \binom{|\beta|-1}{k-1}\left(\frac{KH}{\ell \varepsilon}\right)^{k} \ell^{|\beta|}  h^{|\alpha|-|\beta|}\\ &\quad \times  |\beta|! \frac{M_{|\beta|-k}}{(|\beta|-k)!} \frac{M_k}{k!} M_{|\alpha|-|\beta|} \exp\{-M(\varepsilon\jp{\eta}) \}.
\end{align*}
Notice that
$$
 |\beta|! \frac{M_{|\beta|-k}}{(|\beta|-k)!} \frac{M_k}{k!} M_{|\alpha|-|\beta|} \leq  |\beta|! \frac{M_{|\beta|}}{|\beta|!}M_{|\alpha|-|\beta|} \leq M_{|\alpha|}.
$$ 
Denote by $S=\max\{\frac{KH}{\varepsilon},\ell \}$. Thus
$$
|\partial^\alpha \widehat{\Psi_{a}  u}(x_1,\eta)_{rs}| \leq AC\sum_{\beta \leq \alpha} \binom{\alpha}{\beta}S^{|\beta|}h^{|\alpha|-|\beta|}M_{|\alpha|} \exp\{-M(\varepsilon H^{-1} \jp{\eta}) \} \sum_{k=1}^{|\beta|} \binom{|\beta|-1}{k-1}.
$$
We have $ \sum\limits_{k=1}^{|\beta|} \binom{|\beta|-1}{k-1} = 2^{|\beta|-1}$. Moreover, 
\begin{equation}\label{binom}
\sum_{\beta \leq \alpha} \binom{\alpha}{\beta}(2S)^{|\beta|}h^{|\alpha|-|\beta|}= \sum_{|\beta|=0} ^{|\alpha|} \binom{|\alpha|}{|\beta|}(2S)^{|\beta|}h^{|\alpha|-|\beta|} = (2S+h)^{|\alpha|}.
\end{equation}
In this way
$$
|\partial^\alpha \widehat{\Psi_{a}  u}(x_1,\eta)_{rs}| \leq AC\left(2S+h \right)^{|\alpha|} M_{|\alpha|} \exp\{-M(\varepsilon H^{-1} \jp{\eta}) \}.
$$
By Theorem \ref{partialroumieu} we conclude that $\Psi_a   u \in \Gamma_{\{M_k\}}(G_1\times G_2)$.
\end{proof}

\begin{prop}
	Assume that $a \in \Gamma_{(M_k)}(G_1)$. Then $\Psi_a$ is an automorphism of $\Gamma_{(M_k)}(G_1 \times G_2)$.
\end{prop}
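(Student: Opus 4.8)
The plan is to follow the scheme of Proposition \ref{automrou} verbatim down to its penultimate estimate, and then to perform the quantifier bookkeeping appropriate to the Beurling setting. As in the Roumieu case it suffices to prove that $\Psi_a u \in \Gamma_{(M_k)}(G_1\times G_2)$ whenever $u \in \Gamma_{(M_k)}(G_1\times G_2)$; since $-a \in \Gamma_{(M_k)}(G_1)$ as well, the same conclusion applied to $\Psi_{-a}$ together with the identity $\Psi_a\circ\Psi_{-a}=\mathrm{id}$ (valid already on $C^\infty(G)$) shows that $\Psi_a$ restricts to a bijection of $\Gamma_{(M_k)}(G)$. For the forward inclusion I would start from the Beurling characterization (Theorem \ref{partialbeurling}): for every $h,\varepsilon>0$ there is $C_{h\varepsilon}>0$ with
$$
|\partial^\alpha \widehat{u}(x_1,\eta)_{rs}| \leq C_{h\varepsilon}h^{|\alpha|}M_{|\alpha|}\exp\{-M(\varepsilon\jp{\eta})\},
$$
together with the Beurling bound \eqref{propertyA2} on $A$, which provides for every $\ell>0$ a constant $K_\ell$. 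Running Fa\`{a} di Bruno's formula, Proposition \ref{propM3}, property (M.4), and Proposition \ref{prop2} exactly as in Proposition \ref{automrou}, one arrives at the same intermediate estimate
$$
|\partial^\alpha \widehat{\Psi_a u}(x_1,\eta)_{rs}| \leq AC_{h\varepsilon}\left(2S+h\right)^{|\alpha|}M_{|\alpha|}\exp\{-M(\varepsilon H^{-1}\jp{\eta})\},
$$
now with $S=\max\{K_\ell H\varepsilon^{-1},\ell\}$, valid for every admissible choice of $h,\varepsilon,\ell$.

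The heart of the matter is then to realize, for an arbitrary prescribed pair $h',\varepsilon'>0$, the two requirements $2S+h\leq h'$ and $\varepsilon H^{-1}\geq\varepsilon'$ by a single choice of parameters. I would first fix $\ell=h'/8$, which determines $K_\ell$ through \eqref{propertyA2}; then choose
$$
\varepsilon=\max\{\varepsilon' H,\; 8HK_\ell (h')^{-1}\},
$$
so that simultaneously $\varepsilon H^{-1}\geq\varepsilon'$ and $K_\ell H\varepsilon^{-1}\leq h'/8$, whence $S=\max\{K_\ell H\varepsilon^{-1},\ell\}\leq h'/8$; finally take $h=h'/2$, giving $2S+h\leq h'/4+h'/2\leq h'$. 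Since $M$ is non-decreasing and $\varepsilon H^{-1}\geq\varepsilon'$, the decay factor is bounded by $\exp\{-M(\varepsilon'\jp{\eta})\}$, and absorbing $AC_{h\varepsilon}$ into a new constant $C_{h'\varepsilon'}$ yields
$$
|\partial^\alpha \widehat{\Psi_a u}(x_1,\eta)_{rs}| \leq C_{h'\varepsilon'}(h')^{|\alpha|}M_{|\alpha|}\exp\{-M(\varepsilon'\jp{\eta})\}.
$$
As $h',\varepsilon'>0$ were arbitrary, Theorem \ref{partialbeurling} gives $\Psi_a u \in \Gamma_{(M_k)}(G)$.

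The step I expect to be the genuine obstacle---and the only place where the argument departs from the Roumieu one---is controlling $S$ in the Beurling regime. Here $K_\ell$ cannot be made small (it blows up as $\ell\to 0$), so one cannot force the term $K_\ell H\varepsilon^{-1}$ below $h'/8$ by shrinking $\ell$. The resolution is to exploit the freedom, peculiar to the Beurling class, of demanding an arbitrarily fast decay of $\widehat{u}$ in $\eta$: taking the decay parameter $\varepsilon$ large (a stronger yet still available bound on $u$, since $u\in\Gamma_{(M_k)}$) suppresses $K_\ell H\varepsilon^{-1}$ without any constraint on the size of $K_\ell$. Everything else---the combinatorial identities, the bound $|\mu_r(\eta)|\leq\jp{\eta}$, and the collapse of the $M$-factors into $M_{|\alpha|}$---transfers unchanged from Proposition \ref{automrou}.
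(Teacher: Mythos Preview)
Your proposal is correct and follows essentially the same route as the paper's proof: both reduce to the intermediate bound $|\partial^\alpha \widehat{\Psi_a u}(x_1,\eta)_{rs}| \leq AC_{h\varepsilon}(2S+h)^{|\alpha|}M_{|\alpha|}\exp\{-M(\varepsilon H^{-1}\jp{\eta})\}$ with $S=\max\{K_\ell H\varepsilon^{-1},\ell\}$, and then select $\ell$ as a small fraction of the target growth parameter and $\varepsilon$ large enough to dominate both $\varepsilon' H$ and the $K_\ell$--dependent term. The paper takes $\ell=j/4$, $\varepsilon=\max\{\delta H,4K_\ell H/j\}$, $h=j/2$, while you take $\ell=h'/8$, $\varepsilon=\max\{\varepsilon' H,8HK_\ell/h'\}$, $h=h'/2$; these are the same idea with different numerical constants, and your explicit remark that the Beurling freedom to demand arbitrarily large $\varepsilon$ absorbs the uncontrolled size of $K_\ell$ is exactly the point.
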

\begin{proof}
Let $u \in \Gamma_{(M_k)}(G_1 \times G_2)$. By \eqref{propertyA} we have that
$$
|\partial^\alpha A(x_1)| \leq K_\ell \ell^{|\alpha|-1}M_{|\alpha| -1}, \quad \forall x_1 \in G_1.
$$
By Theorem \ref{partialbeurling} for all $h, \varepsilon >0$ there exists $C_{h\varepsilon}>0$ such that
\begin{equation}\label{propub2}
|\partial^\alpha \widehat{u}(x_1,\eta)_{rs}| \leq C_{h\varepsilon}h^{|\alpha|}M_{|\alpha|} \exp\{-M(\varepsilon\jp{\eta}) \},
\end{equation}
for all $\alpha \in \N_0^{d_1}$, $x_1\in G_1$, $[\eta] \in \widehat{G_2}$ and $1\leq r,s \leq d_\eta$. We can follow the proof of  Roumieu type case and obtain
$$
|\partial^\alpha \widehat{\Psi_{a}  u}(x_1,\eta)_{rs}| \leq C_{h\varepsilon}\left(2S+h \right)^{|\alpha|} M_{|\alpha|} \exp\{-M(\varepsilon H^{-1} \jp{\eta}) \},
$$
where $S=\max\{\frac{K_\ell H}{\varepsilon}, \ell \}$. Given $j, \delta>0$, choose $\ell=\frac{j}{4}$ and $\varepsilon = \max\left\{\delta H, \frac{4K_jH}{j} \right\}$. Thus $S=\frac{j}{4}$ and $$\exp\{-M(\varepsilon H^{-1} \jp{\eta}) \} \leq \exp\{-M(\delta \jp{\eta}) \},$$ for all $[\eta] \in \widehat{G_2}$. Hence
$$
|\partial^\alpha \widehat{\Psi_{a}  u}(x_1,\eta)_{rs}| \leq AC_{h\delta}\left(\tfrac{j}{2}+h \right)^{|\alpha|} M_{{|\alpha|}} \exp\{-M(\delta \jp{\eta}) \},
$$
Choose now $h=\frac{j}{2}$. Therefore
$$
|\partial^\alpha \widehat{\Psi_{a}  u}(x_1,\eta)_{rs}|\leq C_{j\delta} j^{|\alpha|} M_{|\alpha|} \exp\{-M(\delta \jp{\eta}) \},
$$
which implies that $\Psi_a   u \in \Gamma_{(M_k)}(G_1\times G_2)$.
\end{proof}

\begin{prop}\label{autodualrou}
	For $a \in \Gamma_{\{M_k\}}(G_1)$, the operator $\Psi_a$ is an automorphism of $\Gamma_{\{M_k\}}'(G_1\times G_2)$.
\end{prop}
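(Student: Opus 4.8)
The plan is to prove that $\Psi_a$ maps $\Gamma'_{\{M_k\}}(G)$ continuously into itself. Since $\Psi_{-a}=\Psi_a^{-1}$ on $\DG$ by \cite{KMR19b} and $-a\in\Gamma_{\{M_k\}}(G_1)$ as well, applying the same statement to $-a$ then shows that the inverse also preserves the class, so that $\Psi_a$ is an automorphism of $\Gamma'_{\{M_k\}}(G)$. Fix therefore $u\in\Gamma'_{\{M_k\}}(G)$. Exactly as in the function case, definition \eqref{psialpha2} gives at the level of partial Fourier coefficients the identity $\widehat{\Psi_a u}(\,\cdot\,,\eta)_{rs}=e^{i\mu_r(\eta)A}\,\widehat{u}(\,\cdot\,,\eta)_{rs}$ in $\mathcal{D}'(G_1)$, so that for every $\varphi\in\Gamma_{\{M_k\}}(G_1)$ one has $\langle\widehat{\Psi_a u}(\,\cdot\,,\eta)_{rs},\varphi\rangle=\langle\widehat{u}(\,\cdot\,,\eta)_{rs},e^{i\mu_r(\eta)A}\varphi\rangle$. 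By Theorem \ref{ultraroumieu} it suffices to estimate this pairing, and the core of the argument is a uniform bound on $\|e^{i\mu_r(\eta)A}\varphi\|_h$.

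To obtain such a bound I would reuse the Fa\`a di Bruno estimate \eqref{propexp} for $|\partial^\beta e^{i\mu_r(\eta)A}|$ together with $|\mu_r(\eta)|\le\jp{\eta}$ and $|\partial^{\alpha-\beta}\varphi|\le\|\varphi\|_{h'}h'^{|\alpha|-|\beta|}M_{|\alpha|-|\beta|}$, expanding $\partial^\alpha(e^{i\mu_r(\eta)A}\varphi)$ by Leibniz (the term $\beta=0$, where $|e^{i\mu_r(\eta)A}|=1$, being harmless). The new feature compared with Proposition \ref{automrou} is that there is no decaying factor $\exp\{-M(\varepsilon\jp{\eta})\}$ available to absorb the powers $\jp{\eta}^k$. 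Instead I would trade each such power using \eqref{inf} in the form $\jp{\eta}^k\le\delta^{-k}M_k\exp\{M(\delta\jp{\eta})\}$, which produces a factor $\tfrac{M_k}{k!}\delta^{-k}$ and a single exponential $\exp\{M(\delta\jp{\eta})\}$ pulled out of every summand. The combinatorial inequality $|\beta|!\,\tfrac{M_{|\beta|-k}}{(|\beta|-k)!}\tfrac{M_k}{k!}M_{|\alpha|-|\beta|}\le M_{|\alpha|}$, coming from (M.4) and $M_jM_{|\alpha|-j}\le M_{|\alpha|}$, followed by the binomial summations as in \eqref{binom}, then yields $\|e^{i\mu_r(\eta)A}\varphi\|_h\le C\|\varphi\|_{h'}\exp\{M(\delta\jp{\eta})\}$ for the index $h=T+h'$, with $T=T(K,\ell,\delta)$ a constant independent of $\eta$.

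Combining this with the distributional bound for $u$ supplied by Theorem \ref{ultraroumieu} gives $|\langle\widehat{\Psi_a u}(\,\cdot\,,\eta)_{rs},\varphi\rangle|\le C\,C_{h\varepsilon}\|\varphi\|_{h'}\exp\{M(\delta\jp{\eta})\}\exp\{M(\varepsilon\jp{\eta})\}$, and Proposition \ref{prop2}$(ii)$ merges the two associated-function exponentials into $A\exp\{M(H(\delta+\varepsilon)\jp{\eta})\}$. Finally, given arbitrary target parameters $h_0,\varepsilon_0>0$ from the Roumieu characterization, I would put $h'=h_0$ and choose $\delta=\varepsilon=\varepsilon_0/(2H)$, so that $H(\delta+\varepsilon)=\varepsilon_0$; this fixes the constant $T$ and hence the index $h=T+h_0$ to be used in the hypothesis on $u$, for which Theorem \ref{ultraroumieu} furnishes $C_{h\varepsilon}$. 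The outcome is $|\langle\widehat{\Psi_a u}(\,\cdot\,,\eta)_{rs},\varphi\rangle|\le C_{h_0\varepsilon_0}\|\varphi\|_{h_0}\exp\{M(\varepsilon_0\jp{\eta})\}$ for all $\varphi$, which is precisely the criterion of Theorem \ref{ultraroumieu} for $\Psi_a u\in\Gamma'_{\{M_k\}}(G)$.

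I expect the delicate point to be the uniform control of $\|e^{i\mu_r(\eta)A}\varphi\|_h$: one must keep the index $h$ fixed (depending on $\delta$ but not on $\eta$) and let the entire $\eta$-growth be carried by the single exponential $\exp\{M(\delta\jp{\eta})\}$, so that the freedom to choose $h$ and $\varepsilon$ in the Roumieu characterization of $\Gamma'_{\{M_k\}}$ can be exploited at the last step.
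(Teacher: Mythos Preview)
Your proposal is correct and follows essentially the same route as the paper: pass to partial Fourier coefficients, move the multiplier $e^{i\mu_r(\eta)A}$ onto the test function, control $\|e^{i\mu_r(\eta)A}\varphi\|_h$ via Leibniz and the Fa\`a di Bruno bound \eqref{propexp} together with (M.4), and then invoke Theorem~\ref{ultraroumieu}. The only tactical difference is in how you absorb the factor $\jp{\eta}^k$: the paper multiplies the Leibniz--Fa\`a di Bruno estimate by the already-present factor $\exp\{M(\varepsilon\jp{\eta})\}$ from the distributional bound and applies Proposition~\ref{propM3}$(ii)$ to get $\jp{\eta}^k\exp\{M(\varepsilon\jp{\eta})\}\le A\varepsilon^{-k}M_k\exp\{M(H\varepsilon\jp{\eta})\}$, ending up with a single exponential; you instead bound $\jp{\eta}^k\le\delta^{-k}M_k\exp\{M(\delta\jp{\eta})\}$ via \eqref{inf}, obtain two exponentials, and then merge them by Proposition~\ref{prop2}$(ii)$. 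Both variants yield the needed estimate with the same freedom in choosing the seminorm index and the exponential parameter, so this is a cosmetic difference rather than a substantive one.
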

\begin{proof}

Most of the estimates that we will use here were proved in the demonstration of Theorem \ref{automrou}. Let us show that $\Psi_a   u \in \Gamma_{\{M_k\}}'(G_1 \times G_2)$ when $u \in \Gamma_{\{M_k\}}'(G_1\times G_2)$. By the characterization of ultradistributions of Roumieu type (Theorem \ref{ultraroumieu}) for all $h,\varepsilon>0$, there exists $C_{h\varepsilon}>0$ such that
$$
|\langle \widehat{u}(\: \cdot\: ,\eta)_{rs}, \varphi \rangle | \leq C_{h\varepsilon} \|\varphi\|_{h}\exp\{M(\varepsilon\jp{\eta}) \}, \quad \forall \varphi \in \Gamma_{\{M_k\}}(G_1).
$$
In this way, for $\varphi \in \Gamma_{\{M_k\}}(G_1)$, we have
\begin{align}
\jp{ \widehat{\Psi_{a}  u}(\cdot,\eta)_{rs},\varphi} = \jp{e^{i\mu_r(\eta)A(\cdot)}\widehat{u}(\cdot,\eta)_{rs},\varphi} &= \jp{\widehat{u}(\cdot,\eta)_{rs},e^{i\mu_r(\eta)A(\cdot)}\varphi }  \nonumber .
\end{align}
Hence,
$$
\jp{\widehat{u}(\cdot,\eta)_{rs},e^{i\mu_r(\eta)A(\cdot)}\varphi } \leq C_{h\varepsilon}\|e^{i\mu_r(\eta)A(\cdot)}\varphi\|_h\exp\{M(\varepsilon\jp{\eta}) \}.
$$
Notice that
$$
\left|\partial^\alpha \left(e^{i\mu_r(\eta)A(x_1)}\varphi(x_1)\right)\right| \leq \sum_{\beta\leq \alpha} \binom{\alpha}{\beta} \left|\partial^\beta e^{i\mu_r(\eta)A(x_1)}\right| \left|\partial^{\alpha-\beta }\varphi(x_1)\right|.
$$
By \eqref{propexp}, using that $|\partial^{|\alpha|} A(x_1)| \leq K \ell^{|\alpha|-1} M_{|\alpha|-1}$, we obtain
$$
|\partial^\beta e^{i\mu_r(\eta)A(x_1)}| \leq \sum_{k=1}^{|\beta|} \binom{|\beta|-1}{k-1}\frac{1}{k!} K^k\jp{\eta}^k \ell^{|\beta|-k} |\beta|! \frac{M_{|\beta|-k}}{(|\beta|-k)!}.
$$
By Proposition \ref{propM3},
$$
\jp{\eta}^{k}\exp\{M(\varepsilon\jp{\eta}) \} \leq A\varepsilon^{-k} M_{k}\exp\{M(H\varepsilon\jp{\eta}) \} ,
$$
and then by the property (M.4) we obtain
\begin{align}\label{2nd-time}
	\left|\partial^\alpha \left(e^{i\mu_r(\eta)A(x_1)}\varphi(x_1)\right)\right|\exp\{M(\varepsilon\jp{\eta}) \} &\leq A\sum_{\beta\leq \alpha} \binom{\alpha}{\beta} \sum_{k=1}^{|\beta|} \binom{|\beta|-1}{k-1}\left(\frac{K}{\ell \varepsilon}\right)^{k}  \ell^{|\beta|}M_{|\beta|} \\ & \quad \times  \left|\partial^{|\alpha|-|\beta| }\varphi(x_1)\right| \exp\{M(H\varepsilon\jp{\eta}) \} \nonumber.
\end{align}
Let $S=\max\left\{\frac{K}{\varepsilon},\ell \right\}$, then for any $j>0$ we have
\begin{align*}
\left|\partial^\alpha \left(e^{i\mu_r(\eta)A(x_1)}\varphi(x_1)\right)\right|\exp\{M(\varepsilon\jp{\eta}) \} &\leq A\sum_{\beta\leq \alpha} \binom{\alpha}{\beta}S^{|\beta|} M_{|\beta|} \left|\partial^{|\alpha|-|\beta|}\varphi(x_1)\right| \sum_{k=1}^{|\beta|} \binom{|\beta|-1}{k-1}\\ &\quad \times \exp\{M(H\varepsilon\jp{\eta}) \}  \\
&\leq A\sum_{\beta\leq \alpha} \binom{\alpha}{\beta}\left(2S\right)^{|\beta|} M_{|\beta|} \left\| \varphi \right\|_{j} j^{|\alpha|-|\beta|}M_{|\alpha|-|\beta|} \exp\{M(H\varepsilon\jp{\eta}) \}.
\end{align*}

Using the fact that $M_{|\alpha|-|\beta|}M_{|\beta|} \leq M_{|\alpha|}$ and \eqref{binom}, we obtain
\begin{align*}
\left|\partial^\alpha \left(e^{i\mu_r(\eta)A(t)}\varphi(t)\right)\right|\exp\{M(\varepsilon\jp{\eta}) \} 
	&\leq A\left(2S+j\right)^{|\alpha|} \left\| \varphi \right\|_{j} M_{|\alpha|} \exp\{M(H\varepsilon\jp{\eta}) \} \nonumber.
\end{align*}

Given $j, \delta >0$, choose $\varepsilon = \frac{\delta}{H}$ and then $h=2S+j$. Notice that
$$
\left\|e^{i\mu_r(\eta)A(\cdot)}\varphi\right\|_{h}\exp\{M(\varepsilon\jp{\eta}) \} \leq A\|\varphi\|_{j}\exp\{M(\delta\jp{\eta}) \},
$$
then we conclude that
\begin{align*}
\left|\jp{ \widehat{\Psi_{a}  u}(\cdot,\eta)_{rs},\varphi} \right| &\leq
C_{h\varepsilon}\|e^{i\mu_r(\eta)A(\cdot)}\varphi\|_h\exp\{M(\varepsilon\jp{\eta}) \} \\
&\leq C_{j\delta}	\|\varphi\|_{j}\exp\{M(\delta\jp{\eta}) \}.
\end{align*}
Therefore $\Psi_a   u \in \Gamma_{\{M_k\}}'(G_1\times G_2)$ and then $\Psi_a$ is an automorphism.
\end{proof}
\begin{prop}
	For $a \in \Gamma_{(M_k)}(G_1)$, the operator $\Psi_a$ is an automorphism of $\Gamma_{(M_k)}'(G_1\times G_2)$.
\end{prop}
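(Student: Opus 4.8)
The plan is to mirror the proof of Proposition~\ref{autodualrou}, showing that $\Psi_a u \in \Gamma'_{(M_k)}(G_1\times G_2)$ whenever $u \in \Gamma'_{(M_k)}(G_1\times G_2)$, and then verifying the condition of Theorem~\ref{ultrabeurling} for $\Psi_a u$. By that characterization there are \emph{fixed} $\varepsilon,h,C>0$ with $|\jp{\widehat{u}(\cdot,\eta)_{rs},\psi}| \leq C\|\psi\|_h \exp\{M(\varepsilon\jp{\eta})\}$ for all $\psi \in \Gamma_{(M_k)}(G_1)$. Since $\widehat{\Psi_{a}u}(x_1,\eta)_{rs} = e^{i\mu_r(\eta)A(x_1)}\widehat{u}(x_1,\eta)_{rs}$, duality gives $\jp{\widehat{\Psi_{a}u}(\cdot,\eta)_{rs},\varphi} = \jp{\widehat{u}(\cdot,\eta)_{rs}, e^{i\mu_r(\eta)A(\cdot)}\varphi}$, so it suffices to estimate $\|e^{i\mu_r(\eta)A(\cdot)}\varphi\|_h$ for this fixed $h$ and to control its growth in $\jp{\eta}$.

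For the seminorm estimate I would expand $\partial^\alpha(e^{i\mu_r(\eta)A}\varphi)$ by Leibniz, bound $\partial^\beta e^{i\mu_r(\eta)A}$ by the Fa\`a di Bruno estimate \eqref{propexp}, now using the Beurling bound \eqref{propertyA2} on $A$ so that both the constant $K_\ell$ and the geometric rate $\ell$ are at my disposal, and then absorb the resulting factors $\jp{\eta}^k$ into an exponential. The crucial deviation from the Roumieu case is that I would \emph{not} reuse the fixed $\varepsilon$ coming from $u$: instead I introduce a free scale $s>0$ and invoke the elementary bound $\jp{\eta}^k \leq s^{-k}M_k\exp\{M(s\jp{\eta})\}$ from \eqref{inf}, after which property (M.4) collapses the products $\tfrac{M_k}{k!}\tfrac{M_{|\beta|-k}}{(|\beta|-k)!}|\beta|!$ into $M_{|\beta|}$. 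Writing $S:=\max\{K_\ell/s,\ell\}$, the inner $k$-sum is $\leq (2S)^{|\beta|}$, and the binomial identity \eqref{binom} together with $M_{|\beta|}M_{|\alpha|-|\beta|}\leq M_{|\alpha|}$ gives $|\partial^\alpha(e^{i\mu_r(\eta)A}\varphi)| \leq (2S+j)^{|\alpha|} M_{|\alpha|}\|\varphi\|_j \exp\{M(s\jp{\eta})\}$ for every $j>0$. Provided $2S+j\le h$, taking the supremum defining $\|\cdot\|_h$ yields $\|e^{i\mu_r(\eta)A(\cdot)}\varphi\|_h \le \|\varphi\|_j \exp\{M(s\jp{\eta})\}$, and combining the two exponentials through Proposition~\ref{prop2}(ii) produces $|\jp{\widehat{\Psi_{a}u}(\cdot,\eta)_{rs},\varphi}| \le CA\,\|\varphi\|_j \exp\{M(H(s+\varepsilon)\jp{\eta})\}$, which is exactly the bound required by Theorem~\ref{ultrabeurling} with $\varepsilon'=H(s+\varepsilon)$, $h'=j$, $C'=CA$.

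I expect the one genuinely delicate point to be the constraint $2S+j\le h$ with $h$ \emph{fixed}, which is precisely what separates this Beurling statement from the Roumieu Proposition~\ref{autodualrou}, where one is instead free to enlarge the input parameter by setting $h=2S+j$. The resolution is the free scale $s$: choosing first $\ell=h/6$ fixes $K_\ell=K_{h/6}$, then choosing $s=6K_{h/6}/h$ forces $K_\ell/s=\ell$ and hence $S=h/6$, and finally $j=h/3$ gives $2S+j=2h/3\le h$. All the constants so chosen are finite and positive, so the constraint is met and the estimate closes; the Beurling analogue of Proposition~\ref{autodualrou} then follows, with $\Psi_{-a}$ providing the inverse so that $\Psi_a$ is an automorphism of $\Gamma'_{(M_k)}(G_1\times G_2)$.
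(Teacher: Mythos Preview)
Your proof is correct and follows essentially the same route as the paper: duality reduces the problem to estimating $\|e^{i\mu_r(\eta)A(\cdot)}\varphi\|_h$ for the \emph{fixed} $h$ coming from $u$, and the Leibniz/Fa\`a di Bruno/(M.4) machinery of Proposition~\ref{autodualrou} is reused verbatim.

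The one point of genuine difference is how you meet the constraint $2S+j\le h$. The paper absorbs $\jp{\eta}^k$ into the \emph{same} weight $\exp\{M(\varepsilon\jp{\eta})\}$ via Proposition~\ref{propM3}(ii), obtaining $S=\max\{K_\ell/\varepsilon,\ell\}$, and then observes that $\varepsilon$ can be taken as large as one likes (since the Beurling ultradistribution bound for some $\varepsilon$ implies it for every larger one, by monotonicity of $M$); with $\ell=h/4$, $j=h/2$ and $\varepsilon$ enlarged so that $K_\ell/\varepsilon\le\ell$, one lands exactly at $2S+j=h$. You instead introduce an independent scale $s$, use the cruder bound $\jp{\eta}^k\le s^{-k}M_k\exp\{M(s\jp{\eta})\}$ from \eqref{inf}, and combine the two exponentials afterwards via Proposition~\ref{prop2}(ii). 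Your resolution is a bit more explicit about where the freedom comes from; the paper's is marginally tidier since it avoids the extra merging step. Either way the argument closes, and $\Psi_{-a}$ furnishes the inverse.
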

\begin{proof}
	Let us show that $\Psi_a   u \in \Gamma_{(M_k)}'(G_1 \times G_2)$ when $u \in \Gamma_{(M_k)}'(G_1 \times G_2)$. By the characterization of ultradistributions of Beurling type (Theorem \ref{ultrabeurling}) there exist $h,\varepsilon, C>0$ such that
	$$
	|\langle \widehat{u}(\: \cdot\: ,\eta)_{rs}, \varphi \rangle | \leq C \|\varphi\|_{h}\exp\{M(\varepsilon\jp{\eta}) \}, \quad \forall \varphi \in \Gamma_{(M_k)}(G_1).
	$$
	In this way, for $\varphi \in \Gamma_{(M_k)}(G_1)$,
	\begin{align}
	\jp{ \widehat{\Psi_{a}  u}(\cdot,\eta)_{rs},\varphi} = \jp{e^{i\mu_r(\eta)A(\cdot)}\widehat{u}(\cdot,\eta)_{rs},\varphi} &= \jp{\widehat{u}(\cdot,\eta)_{rs},e^{i\mu_r(\eta)A(\cdot)}\varphi }  \nonumber .
	\end{align}
	We have 
	$$
	\jp{\widehat{u}(\cdot,\eta)_{rs},e^{i\mu_r(\eta)A(\cdot)}\varphi } \leq C\|e^{i\mu_r(\eta)A(\cdot)}\varphi\|_h\exp\{M(\varepsilon\jp{\eta}) \}.
	$$

	Following the proof of Proposition \ref{autodualrou}, by the fact that $a\in \Gamma_{(M_k)}(G_1)$ we obtain 
$$
\|e^{i\mu_r(\eta)A(\cdot)}\varphi\|_{2S+j}\exp\{M(\varepsilon\jp{\eta}) \} \leq A \|\varphi\|_{j}  \exp\{M(H\varepsilon\jp{\eta}) \},
$$
	where $S=\max\left\{ \frac{K_\ell}{\varepsilon},\ell\right\}.$
Now, choose $\ell=\frac{h}{4}$ and consider $\varepsilon$ sufficiently large such that $S=\ell$. For $j=\frac{h}{2}$, we obtain
\begin{align*}
	\jp{ \widehat{\Psi_{a}  u}(\cdot,\eta)_{rs},\varphi} &\leq C\|e^{i\mu_r(\eta)A(\cdot)}\varphi\|_h\exp\{M(\varepsilon\jp{\eta}) \} \\
	&\leq C \|\varphi\|_{\frac{{h}}{2}}  \exp\{M(H\varepsilon\jp{\eta}) \},
\end{align*}
which implies that $ \Psi_{a}  u \in \Gamma'_{(M_k)}(G_1 \times G_2)$.
\end{proof}

\section{Global Komatsu hypoellipticity and solvability}

Let us turn our attention to the study of global properties of the operator $L_a$ defined on the compact Lie group $G:=G_1\times G_2$ by 
$$
L_a=X_1+a(x_1)X_2,
$$
where $X_1  \in \mathfrak{g}_1$, $X_2 \in \mathfrak{g}_2$, and $a \in \Gamma_{\{M_k\}}(G_1)$ (or $a \in \Gamma_{(M_k)} (G_1)$) is a real-valued function.

The case where $a$ is a constant was studied in \cite{KMR19c} and we have the following characterization of the global properties of $L_a$:

\begin{thm} [Thms 3.2, 3.4, 3.6 and 3.8 of \cite{KMR19c}] \label{thmconstant}The operator $L_a=X_1+aX_2$, with $a\in\C$, is globally $\Gamma_{\{M_k\}}$-hypoelliptic (respectively, globally $\Gamma_{(M_k)}$-hypoelliptic) if and only if the following conditions hold:
	\begin{enumerate}[1.]
		\item The set
		$$
		\mathcal{N}=\{([\xi].[\eta])\in \widehat{G_1}\times \widehat{G_2};\  \lambda_m(\xi)+a\mu_r(\eta) = 0, \mbox{ for some } 1 \leq m \leq d_\xi \mbox{ and } 1 \leq r \leq d_\eta\}
		$$
		is finite.
		\item $\forall N>0$ (respectively, $\exists N>0$),  $\exists C_N>0$ such that
		$$
		|\lambda_m(\xi)+a\mu_r(\eta)| \geq C_N \exp\{-M(N(\jp{\xi}+\jp{\eta})) \},
		$$
		for all $[\xi] \in \widehat{G_1}$, $[\eta] \in \widehat{G_2}$, $1\leq m \leq d_\xi$, and $1 \leq r \leq d_\eta$, whenever $\lambda_m(\xi)+a\mu_r(\eta)\neq0$.
	\end{enumerate}
	Moreover, the operator $L_a$ is globally $\Gamma_{\{M_k\}}$-solvable (respectively, globally $\Gamma_{(M_k)}$-solvable) if and only if the condition 2. above is satisfied.
\end{thm}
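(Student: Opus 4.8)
The plan is to transfer everything to the Fourier side, where $L_a$ becomes a diagonal multiplier. Since $L_a=X_1+aX_2$ is left-invariant and $a\in\C$ is constant, its symbol on the representation $\xi\otimes\eta$ is $\sigma_{L_a}(\xi\otimes\eta)=\sigma_{X_1}(\xi)\otimes I_\eta+a\,I_\xi\otimes\sigma_{X_2}(\eta)$. Choosing the representatives $\xi,\eta$ that simultaneously diagonalize $\sigma_{iX_1}$ and $\sigma_{iX_2}$ as arranged before \eqref{symbol}, this symbol is diagonal, and the equation $L_a u=f$ becomes, componentwise in the partial–double Fourier coefficients,
\begin{equation*}
i\,(\lambda_m(\xi)+a\mu_r(\eta))\,\doublehat{\,u\,}\!(\xi,\eta)_{mn_{rs}}=\doublehat{\,f\,}\!(\xi,\eta)_{mn_{rs}},
\end{equation*}
for all $[\xi]\in\widehat{G_1}$, $[\eta]\in\widehat{G_2}$ and all admissible indices; here $\mathcal N$ is exactly the index set on which the multiplier vanishes. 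I would then read off global hypoellipticity (that $u\in\Gamma'$ with $L_a u\in\Gamma$ forces $u\in\Gamma$) and global solvability (that every admissible $f\in\Gamma$ lies in $L_a\Gamma$) directly from the characterizations \eqref{roum}, \eqref{distrou}, \eqref{caracbeurling} and \eqref{distbeur}.

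For the sufficiency direction of hypoellipticity, assume (1) and (2) and take $u\in\Gamma'$ with $f:=L_a u\in\Gamma$. On the finite set $\mathcal N$ there are only finitely many coefficients of $u$, contributing a finite combination of matrix coefficients, which is real-analytic and hence lies in every Komatsu class. Off $\mathcal N$ I invert the multiplier and bound $|\doublehat{\,u\,}|$ by the decay of $\doublehat{\,f\,}$ times the reciprocal of the lower bound in (2). In the Roumieu case $f$ supplies a decay parameter $N_1$ while (2) is available for every $N_2$; choosing $N_2$ large relative to $N_1$ and using the halving inequality \eqref{komine} lets the factor $\exp\{M(N_2(\cdots))\}$ be absorbed into half of $\exp\{-M(N_1(\cdots))\}$, leaving a genuine decay after recombining the two variables by Proposition \ref{prop2}(i); so $u$ satisfies \eqref{roum} and $u\in\Gamma_{\{M_k\}}$. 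The Beurling case is the mirror image, with (2) supplying one $N$ and the decay of $f$ available for every parameter.

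The necessity statements and the solvability claim are handled by contraposition with data concentrated where a hypothesis fails. If (1) fails, i.e. $\mathcal N$ is infinite, I put a fixed nonzero value on an infinite subset $S\subseteq\mathcal N$ and zero elsewhere; the resulting bounded array satisfies \eqref{distrou}/\eqref{distbeur} so defines $u\in\Gamma'$, it violates the decay of \eqref{roum}/\eqref{caracbeurling} so $u\notin\Gamma$, and $L_a u=0\in\Gamma$, contradicting hypoellipticity. If (2) fails, I select a sequence of indices (off $\mathcal N$) along which $0<|\lambda_m(\xi)+a\mu_r(\eta)|$ is smaller than every (resp. some) $\exp\{-M(N(\cdots))\}$, and set $\doublehat{\,f\,}$ equal to the multiplier itself there. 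Then the forced solution has $\doublehat{\,u\,}=1/i$ on the sequence, a bounded array defining an element of $\Gamma'\setminus\Gamma$, while $f\in\Gamma$ and $L_a u=f$; this simultaneously kills hypoellipticity and, since the solution is forced off $\mathcal N$, shows $f$ lies in the range of no ultradifferentiable function, so (2) is necessary for solvability as well. For sufficiency of (2) for solvability I take $f\in\Gamma$ satisfying the compatibility conditions (coefficients vanishing on $\mathcal N$), define $\doublehat{\,u\,}$ by inverting the multiplier off $\mathcal N$ and setting it to $0$ on $\mathcal N$, and estimate exactly as in the hypoellipticity sufficiency.

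I expect the main obstacle to be the quantifier bookkeeping in the sufficiency arguments: matching the decay parameter coming from $f\in\Gamma$ against the loss parameter $N$ in condition (2) through the associated function $M$, and recombining the one-variable exponentials $\exp\{-M(\cdot\,\jp{\xi})\}\exp\{-M(\cdot\,\jp{\eta})\}$ into a single $\exp\{-M(N(\jp{\xi}+\jp{\eta}))\}$. This is where Propositions \ref{prop2} and \ref{propM3} together with \eqref{komine} and \eqref{propM1} must be combined carefully, and where the Roumieu and Beurling cases genuinely diverge. A secondary technical point is confirming that the counterexamples really land in $\Gamma'\setminus\Gamma$: one must control the dimensions $d_\xi,d_\eta$ and the sizes of the individual entries so that the constructed arrays define honest ultradistributions via \eqref{distrou}/\eqref{distbeur} while failing the faster decay of \eqref{roum}/\eqref{caracbeurling}.
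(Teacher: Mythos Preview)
The paper does not prove this theorem at all: it is quoted verbatim from the companion paper \cite{KMR19c} (Theorems 3.2, 3.4, 3.6 and 3.8 there) and used here as a black box to characterize the normal form $L_{a_0}$. So there is no ``paper's own proof'' to compare against; your outline is in fact a reconstruction of what that cited reference does.

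That said, your plan is the correct one and is exactly the standard argument for constant-coefficient left-invariant operators: diagonalize the symbol on $\xi\otimes\eta$, reduce $L_a u=f$ to the scalar equations $i(\lambda_m(\xi)+a\mu_r(\eta))\,\doublehat{\,u\,}=\doublehat{\,f\,}$, and then read hypoellipticity and solvability off the Fourier-side characterizations \eqref{roum}--\eqref{distbeur}. The sufficiency estimates (absorbing the loss $\exp\{M(N(\cdot))\}$ from condition~2 into the decay of $\doublehat{\,f\,}$ via \eqref{komine} and Proposition~\ref{prop2}) and the necessity constructions (singular sequences supported where the symbol is zero or nearly zero) are the right ingredients, and your identification of the quantifier bookkeeping as the only genuinely delicate point is accurate.

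One correction: for the solvability statement you should work on the ultradistribution side, not in $\Gamma$. In the constant-coefficient case the admissible set is $\mathcal{K}_{a_0}\subset\Gamma'_{\{M_k\}}(G)$ (see the paragraph preceding Definition~\ref{definitionsolv}), and ``globally $\Gamma_{\{M_k\}}$-solvable'' here means $L_a(\Gamma'_{\{M_k\}}(G))=\mathcal{K}_{a_0}$. So in the sufficiency direction you take $f\in\mathcal{K}_{a_0}$ satisfying the growth bound \eqref{distrou}, define $\doublehat{\,u\,}$ by inverting the multiplier off $\mathcal N$, and check that the resulting array still satisfies \eqref{distrou} after multiplying by $\exp\{M(N(\cdot))\}$; the associated-function manipulations are the same as in your hypoellipticity argument but with Proposition~\ref{prop2}(ii) in place of (i). Your necessity construction for solvability (forcing $|\doublehat{\,u\,}|=1$ along a sequence where the symbol is exponentially small) is already phrased correctly for this setting.
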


Recall that  $L_{a_0} = X_1 + a_0 X_2$, where $a_0:=  \int_{G_1} a(x_1)\, dx_1.$ Now, if $L_{a_0}u=f \in \Gamma'_{\{M_k\}}(G)$, for some $u \in \Gamma_{\{M_k\}}'(G)$, then
$$
i(\lambda_m(\xi)+a_0\mu_r(\eta))\doublehat{u}(\xi,\eta)_{mn_{rs}} = \doublehat{\, f \,} (\xi,\eta)_{mn_{rs}},
$$ 
		for all $[\xi] \in \widehat{G_1}$, $[\eta] \in \widehat{G_2}$, $1\leq m \leq d_\xi$, and $1 \leq r \leq d_\eta$  (see \cite{KMR19c} for more details). In particular, $f$ belongs to the following set
		$$
		\mathcal{K}_{a_0}:=\{g\in\Gamma'_{\{M_k\}}(G_1 \times G_2); \  \doublehat{g}(\xi,\eta)_{mn_{rs}}=0, \mbox{ whenever } \lambda_m(\xi)+a_0\mu_r(\eta)=0\}.
		$$ 
		
In order to study the solvability of the operator $L_a$, assume that $L_a u=f \in \Gamma'_{\{M_k\}}(G_1 \times G_2)$ for some $u\in\Gamma'_{\{M_k\}}(G_1 \times G_2)$. 
We can write $u=\Psi_{-a}(\Psi_a u)$, so $L_a (\Psi_{-a}(\Psi_a u))=f$. Thus, using the fact that $
\Psi_a\circ L_a = L_{a_0}\circ\Psi_a,
$ we obtain $\Psi_{-a} L_{a_0} \Psi_au = f$, that is,
$$
L_{a_0}\Psi_{a}u = \Psi_{a}f.
$$
This implies that $\Psi_a f \in \mathcal{K}_{a_0}$ and motivates the following definition:

\begin{defi}\label{definitionsolv}
	We say that the operator $L_a$ is globally $\Gamma'_{\{M_k\}}$--solvable if $L_a(\Gamma'_{\{M_k\}}(G_1 \times G_2)) = \mathcal{J}_a$, where
$$
		\mathcal{J}_a := \{v \in \Gamma'_{\{M_k\}}(G_1 \times G_2); \ \Psi_a v \in \mathcal{K}_{a_0} \}.
		$$
\end{defi}

Similarly one defines these global properties for Komatsu classes of Beurling type. Using the results from the previous section, we obtain the following connection between the operator $L_a$ and its normal form, whose proof will be omitted because it is the same as in the smooth case (see \cite{KMR19b}).

\begin{prop}\label{normalform}
Let $a \in \Gamma_{\{M_k\}}(G_1)$ (respectively,  $a \in \Gamma_{(M_k)}(G_1)$) then:
	\begin{enumerate}[1.]
		\item the operator $L_a$ is globally $\Gamma_{\{M_k\}}$-hypoelliptic (respectively, $\Gamma_{(M_k)}$-hy\-po\-ellip\-tic) if and only if $L_{a_0}$ is globally $\Gamma_{\{M_k\}}$-hypoelliptic  (respectively, $\Gamma_{(M_k)}$-hypoelliptic);
		\item the operator $L_a$ is globally $\Gamma_{\{M_k\}}$-solvable (respectively, $\Gamma_{(M_k)}$-solvable) if and only if $L_{a_0}$ is globally $\Gamma_{\{M_k\}}$-solvable (respectively, $\Gamma_{(M_k)}$-solvable).
	\end{enumerate}
\end{prop}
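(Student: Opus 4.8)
The plan is to reduce everything to the intertwining identity \eqref{conjugation}, $\Psi_a\circ L_a = L_{a_0}\circ\Psi_a$, combined with the fact---established in the preceding propositions of Section \ref{secnormal}---that $\Psi_a$ is an automorphism of both $\Gamma_{\{M_k\}}(G)$ and $\Gamma'_{\{M_k\}}(G)$, with inverse $\Psi_{-a}$, and likewise in the Beurling setting. Since every step below invokes only these two facts, the Roumieu and Beurling cases are handled verbatim by the same argument; I would therefore treat them at once and write $\Gamma(G)$, $\Gamma'(G)$ for either class. I record at the outset the equivalent form $L_a = \Psi_{-a}\,L_{a_0}\,\Psi_a$, equivalently $L_a\Psi_{-a} = \Psi_{-a}L_{a_0}$, obtained by composing \eqref{conjugation} with $\Psi_{-a}$ on both sides.

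For part 1, suppose $L_{a_0}$ is globally hypoelliptic and take $u\in\Gamma'(G)$ with $L_a u = f \in \Gamma(G)$. Setting $v:=\Psi_a u\in\Gamma'(G)$ and applying $\Psi_a$ to $L_a u = f$, the intertwining identity gives $L_{a_0}v = \Psi_a f$. Because $\Psi_a$ preserves $\Gamma(G)$, we have $\Psi_a f\in\Gamma(G)$, so global hypoellipticity of $L_{a_0}$ forces $v\in\Gamma(G)$, and applying $\Psi_{-a}$ recovers $u=\Psi_{-a}v\in\Gamma(G)$. The converse is the mirror image: given $w\in\Gamma'(G)$ with $L_{a_0}w\in\Gamma(G)$, put $u:=\Psi_{-a}w$ and use $L_a\Psi_{-a}=\Psi_{-a}L_{a_0}$ to get $L_a u=\Psi_{-a}L_{a_0}w\in\Gamma(G)$; hypoellipticity of $L_a$ then yields $u\in\Gamma(G)$, whence $w=\Psi_a u\in\Gamma(G)$.

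For part 2 the only extra bookkeeping is to track the compatibility subspaces. From $\mathcal{J}_a=\{v:\Psi_a v\in\mathcal{K}_{a_0}\}$ and the bijectivity of $\Psi_a$ I would first record $\Psi_a(\mathcal{J}_a)=\mathcal{K}_{a_0}$. The inclusion $L_a(\Gamma'(G))\subseteq\mathcal{J}_a$ holds unconditionally: if $f=L_a u$ then $\Psi_a f = L_{a_0}(\Psi_a u)$, and since the diagonal relation $i(\lambda_m(\xi)+a_0\mu_r(\eta))\doublehat{\,\cdot\,}=\doublehat{L_{a_0}\cdot}$ forces the image of $L_{a_0}$ to satisfy the vanishing conditions defining $\mathcal{K}_{a_0}$, we get $\Psi_a f\in\mathcal{K}_{a_0}$, i.e.\ $f\in\mathcal{J}_a$. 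For the reverse inclusion, assuming $L_{a_0}$ solvable, take $v\in\mathcal{J}_a$; then $\Psi_a v\in\mathcal{K}_{a_0}=L_{a_0}(\Gamma'(G))$, so $\Psi_a v=L_{a_0}w$ for some $w\in\Gamma'(G)$, and setting $u:=\Psi_{-a}w$ gives $L_a u=\Psi_{-a}L_{a_0}w=\Psi_{-a}\Psi_a v=v$. Hence $L_a(\Gamma'(G))=\mathcal{J}_a$. The opposite implication is symmetric: given $g\in\mathcal{K}_{a_0}$, push it to $\Psi_{-a}g\in\mathcal{J}_a=L_a(\Gamma'(G))$ and apply $\Psi_a$.

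I do not expect a genuine analytic obstacle, since the hard work---that $\Psi_a$ does not leave the Komatsu classes, where conditions (M.1), (M.2), (M.4) and Proposition \ref{propM3} enter---has already been carried out. The only point requiring care is conceptual rather than computational: global solvability is defined relative to the compatibility subspace $\mathcal{J}_a$ (respectively $\mathcal{K}_{a_0}$) in the sense of Definition \ref{definitionsolv}, not relative to the full distribution space, so one must verify that $\Psi_a$ restricts to a bijection between these subspaces and genuinely intertwines the two solvability statements. Once the identity $\Psi_a(\mathcal{J}_a)=\mathcal{K}_{a_0}$ and the unconditional inclusion $L_a(\Gamma'(G))\subseteq\mathcal{J}_a$ are in place, the equivalence is immediate.
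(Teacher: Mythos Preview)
Your proposal is correct and follows precisely the approach the paper has in mind: the paper omits the proof entirely, remarking only that it ``is the same as in the smooth case (see \cite{KMR19b}),'' and your argument is exactly that conjugation-by-$\Psi_a$ transfer, including the bookkeeping with the compatibility subspaces $\mathcal{J}_a$ and $\mathcal{K}_{a_0}$ needed for part 2.
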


From the automorphism $\Psi_{a}$ we recover for the operator $L_a$ the connection between the different notions of global hypoellipticity and global solvability, obtained in \cite{KMR19c} for constant-coefficients vector fields, summarized in the following diagram:
\begin{equation*}\label{diagram}
\begin{array}{ccccc}
GH & \Longrightarrow & G\Gamma_{\{M_k\}}H & \Longrightarrow & G\Gamma_{(M_k)}H\\
\Big\Downarrow & & \Big\Downarrow & & \Big\Downarrow\\
GS & \Longrightarrow & G\Gamma'_{\{M_k\}}S & \Longrightarrow & G\Gamma'_{(M_k)}S
\end{array}
\end{equation*}
Notice that we need to assume that $a \in \Gamma_{(M_k)}(G)$ for the implications involving Komatsu classes of Beurling type.

\subsection{Perturbations by low-order terms}
\mbox{}

We can use the results about perturbations of constant-coefficient vector fields presented in \cite{KMR19c} to study the operator $L_{aq}$ defined on $G_1 \times G_2$ by
$$
L_{aq}= X_1 + a(x_1)X_2+q(x_1,x_2),
$$
where $a \in \Gamma_{\{M_k\}}(G_1)$ is a real-valued ultradifferentiable function and $q \in \Gamma_{\{M_k\}}(G_1 \times G_2)$. The case where $a$ and $q$ are constants was presented in \cite{KMR19c}:
\begin{thm}[Thm 6.1 of \cite{KMR19c}]\label{thmperconstant}
		The operator $L_{aq}=X_1+aX_2+q$, with $a,q \in \C$, is globally $\Gamma_{\{M_k\}}$-hypoelliptic (respectively, globally $\Gamma_{(M_k)}$-hypoelliptic) if and only if the following conditions hold:
	\begin{enumerate}[1.]
		\item The set
		$$
		\mathcal{N}=\{([\xi],[\eta])\in \widehat{G_1}\times\widehat{G_2}; \lambda_m(\xi)+a\mu_r(\eta)-iq = 0, \mbox{ for some } 1 \leq m \leq d_\xi, \ 1\leq r \leq d_\eta \}
		$$
		is finite.
		\item $\forall N>0$ (respectively, $\exists N>0$), $\exists C_N>0$ such that
		$$
		|\lambda_m(\xi)+a\mu_r(\eta)-iq| \geq C_N \exp\{-M(N(\jp{\xi}+\jp{\eta})) \},
		$$
		for all $[\xi] \in \widehat{G_1}$, $[\eta] \in \widehat{G_2}$, $1\leq m \leq d_\xi$, $1\leq r \leq d_\eta$, whenever $\lambda_m(\xi)+a\mu_r(\eta)-iq\neq0$.
	\end{enumerate}
	Moreover, the operator $L_{aq}$ is globally $\Gamma_{\{M_k\}}$-solvable (respectively, globally $\Gamma_{(M_k)}$-solvable) if and only if the condition 2. above is satisfied.
\end{thm}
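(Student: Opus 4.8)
The plan is to push everything to the Fourier side, where $L_{aq}$ becomes multiplication by a scalar symbol, and then read both global properties directly off decay/growth estimates for the (partial) Fourier coefficients. Since $X_1$, $X_2$ and the constant $q$ are all left-invariant on $G=G_1\times G_2$, and the representatives were chosen so that $\sigma_{X_1}(\xi)$ and $\sigma_{X_2}(\eta)$ are diagonal, the operator acts diagonally on coefficients:
\begin{equation*}
\doublehat{\,L_{aq}u\,}\!(\xi,\eta)_{mn_{rs}} = i\,\tau_{m,r}(\xi,\eta)\,\doublehat{\,u\,}\!(\xi,\eta)_{mn_{rs}},
\end{equation*}
with $\tau_{m,r}(\xi,\eta):=\lambda_m(\xi)+a\mu_r(\eta)-iq$, using $i\lambda_m+ia\mu_r+q=i(\lambda_m+a\mu_r-iq)$ and $|\lambda_m(\xi)|\le\jp{\xi}$, $|\mu_r(\eta)|\le\jp{\eta}$ from \eqref{symbol}. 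Thus $L_{aq}u=f$ is equivalent to the scalar equations $i\tau_{m,r}\doublehat{u}(\xi,\eta)_{mn_{rs}}=\doublehat{f}(\xi,\eta)_{mn_{rs}}$, the indices $n,s$ being inert. Writing $\varrho:=\jp{\xi}+\jp{\eta}$, the entire statement becomes a dictionary between the hypotheses on $\{\tau_{m,r}\}$ and the coefficient characterizations \eqref{roum}--\eqref{distbeur}.

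For hypoellipticity I would treat sufficiency and necessity separately. \emph{Sufficiency:} assume $L_{aq}u=f\in\Gamma_{\{M_k\}}(G)$ with $u\in\Gamma'_{\{M_k\}}(G)$. On the finitely many indices with $\tau_{m,r}=0$ (finite by condition 1) the corresponding part of $u$ is a finite combination of matrix coefficients, hence real-analytic and, by (M.3), already in $\Gamma_{\{M_k\}}(G)$. On the indices with $\tau_{m,r}\neq0$ I invert, $\doublehat{u}(\xi,\eta)_{mn_{rs}}=(i\tau_{m,r})^{-1}\doublehat{f}(\xi,\eta)_{mn_{rs}}$, and combine the decay $|\doublehat{f}|\le C\exp\{-M(N_f\varrho)\}$ from \eqref{roum} with the lower bound of condition 2. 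The one nontrivial step is to collapse $\exp\{-M(N_f\varrho)\}\exp\{M(N\varrho)\}$ into a single decaying exponential: Proposition \ref{prop2}(ii) gives $M(N_u\varrho)+M(N\varrho)\le\log A+M(H(N_u+N)\varrho)$, so choosing the target rate $N_u$ and the condition-2 parameter $N$ with $H(N_u+N)\le N_f$ forces $|\doublehat{u}|\le C'\exp\{-M(N_u\varrho)\}$, i.e. $u\in\Gamma_{\{M_k\}}(G)$ by \eqref{roum}. In the Roumieu case condition 2 holds for every $N$, which is exactly what lets me pick such an $N$; in the Beurling case $f$ provides arbitrarily large $N_f$ against the single $N$ of condition 2, and the same computation yields decay at every rate $N_u$, i.e. membership in $\Gamma_{(M_k)}(G)$ via \eqref{caracbeurling}. \emph{Necessity:} by contraposition. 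If $\mathcal N$ is infinite, set $\doublehat{u}=1$ on one vanishing index for each of the infinitely many pairs (with $\varrho\to\infty$) and $0$ elsewhere; then $L_{aq}u=0\in\Gamma_{\{M_k\}}(G)$ while $u\in\Gamma'_{\{M_k\}}(G)\setminus\Gamma_{\{M_k\}}(G)$. If condition 2 fails there is a sequence with $\tau_{m_j,r_j}\neq0$ and $|\tau_{m_j,r_j}|<j^{-1}\exp\{-M(N_0\varrho_j)\}$; setting $\doublehat{u}=1$ there makes $\doublehat{f}=i\tau$ decay fast enough for $f\in\Gamma_{\{M_k\}}(G)$ while $u\notin\Gamma_{\{M_k\}}(G)$.

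Solvability runs along the same dictionary, now tested against \eqref{distrou}/\eqref{distbeur}. The inclusion $L_{aq}(\Gamma'_{\{M_k\}}(G))\subseteq\mathcal K$, where $\mathcal K$ collects the $g\in\Gamma'_{\{M_k\}}(G)$ with $\doublehat{g}(\xi,\eta)_{mn_{rs}}=0$ whenever $\tau_{m,r}=0$, is immediate from the symbol relation. For the reverse inclusion under condition 2, given $f\in\mathcal K\cap\Gamma'_{\{M_k\}}(G)$ I define $u$ by $(i\tau_{m,r})^{-1}\doublehat{f}$ where $\tau_{m,r}\neq0$ and by $0$ where $\tau_{m,r}=0$ (legitimate precisely because $f\in\mathcal K$); condition 2 bounds $|\tau_{m,r}|^{-1}$ by an $M$-exponential, which I multiply against the growth bound for $\doublehat{f}$ and again collapse with Proposition \ref{prop2}(ii) to recover the growth bound defining $\Gamma'_{\{M_k\}}(G)$. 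Conversely, if condition 2 fails, the bad sequence above with $\doublehat{f}=1$ yields $f\in\mathcal K\cap\Gamma'_{\{M_k\}}(G)$ whose forced primitive has coefficients $(i\tau_{m_j,r_j})^{-1}$ growing faster than $\exp\{M(N_0\varrho_j)\}$, contradicting \eqref{distrou}; hence $f\notin L_{aq}(\Gamma'_{\{M_k\}}(G))$. The Beurling variant is identical after a diagonal choice of the bad sequence to defeat every growth rate simultaneously.

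The main obstacle---indeed the only genuinely analytic ingredient---is the uniform handling of the associated function $M$: every estimate amounts to bounding a product $\exp\{\pm M(N_1\varrho)\}\exp\{\pm M(N_2\varrho)\}$ by a single $\exp\{\pm M(N_3\varrho)\}$, and making the resulting scaling constant $N_3$ (together with the stray factor $H$ produced by Proposition \ref{prop2}(ii)) land on the correct side of the Roumieu ``for all $N$'' versus Beurling ``there exists $N$'' quantifier. Keeping this bookkeeping coherent across the four sub-cases (hypoellipticity and solvability, each of Roumieu and Beurling type) is where the care lies; the reduction to the scalar symbol and the explicit counterexamples are then routine.
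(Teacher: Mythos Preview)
The paper does not contain a proof of this statement: Theorem~\ref{thmperconstant} is quoted verbatim from \cite{KMR19c} (their Theorem~6.1), and the present paper only \emph{uses} it, together with the conjugation $\Psi_a\circ e^Q$, to transfer global properties from $L_{aq}$ to its normal form $L_{a_0q_0}$. So there is no ``paper's own proof'' to compare against here.

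That said, your proposal is the standard and correct argument for this type of result, and it is almost certainly what is done in \cite{KMR19c}: the constant-coefficient operator is a Fourier multiplier with scalar symbol $i\tau_{m,r}(\xi,\eta)$, and both global properties translate directly into the decay/growth dictionary \eqref{roum}--\eqref{distbeur} via Proposition~\ref{prop2}(ii). Your bookkeeping of the quantifiers is right in each of the four sub-cases. Two small points worth tightening in a full write-up: (a) in the necessity arguments you implicitly use that the bad sequence satisfies $\varrho_j\to\infty$; this follows because for bounded $\varrho$ there are only finitely many $([\xi],[\eta])$ and hence finitely many nonzero values of $\tau$, so the infimum would be positive; (b) in the Beurling hypoellipticity necessity you need the diagonal construction already (not only in the solvability part), since $f$ must lie in $\Gamma_{(M_k)}(G)$, i.e.\ decay at \emph{every} rate---your sentence ``The Beurling variant is identical after a diagonal choice'' covers this but should be attached to both halves.
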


As discussed in \cite{KMR19c}, also previously in Remark \ref{assumption}, we will assume that there is $Q \in \Gamma_{\{M_k\}}(G_1 \times G_2)$ such that 
$$
(X_1+a(x_1)X_2)Q = q-q_0,
$$
where $q_0$ is the average of $q$ in $G_1 \times G_2$. For instance, if the operator $X_1+a(x_1)X_2$ is globally $\Gamma_{\{M_k\}}$--solvable (see Proposition \ref{normalform}) and $q-q_0$ is an admissible ultradifferentiable function, then this assumption is satisfied. We have that $e^{Q} \in \Gamma_{\{M_k\}}(G_1 \times G_2)$ and
$$
e^Q \circ L_{aq}= L_{aq_0} \circ e^Q,
$$
where $L_{aq_0} = X_1  + a(x_1) X_2 + q_0$. Now, we obtain
$$
\Psi_a \circ L_{aq_0} = L_{a_0q_0} \circ \Psi_a,
$$
where $L_{a_0q_0} = X_1  + a_0X_2 + q_0$. Therefore,
$$
\Psi_a \circ e^Q \circ L_{aq} = \Psi_a \circ L_{aq_0} \circ e^Q = L_{a_0q_0} \circ \Psi_a \circ e^{Q}.
$$

The next result is a consequence of what was done previously.

\begin{prop}\label{evopert}
	The operator $L_{aq}$ is globally $\Gamma_{\{M_k\}}$--hypoelliptic  if and only if $L_{a_0q_0}$ is globally $\Gamma_{\{M_k\}}$--hypoelliptic. Similarly, the operator $L_{aq}$ is globally $\Gamma_{\{M_k\}}$--solvable if and only if $L_{a_0q_0}$ is globally $\Gamma_{\{M_k\}}$--solvable. 
\end{prop}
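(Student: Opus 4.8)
The plan is to package the two intertwining relations established just above into a single conjugation by an automorphism, and then transfer both global properties mechanically. Set $T := \Psi_a \circ e^{Q}$, where $e^{Q}$ denotes the operator of multiplication by the function $e^{Q} \in \Gamma_{\{M_k\}}(G_1\times G_2)$. Since $\Gamma_{\{M_k\}}(G_1\times G_2)$ is an algebra and both $e^{Q}$ and its reciprocal $e^{-Q}$ lie in $\Gamma_{\{M_k\}}(G_1\times G_2)$, multiplication by $e^{Q}$ is an automorphism of $\Gamma_{\{M_k\}}(G_1\times G_2)$, and by transposition an automorphism of $\Gamma'_{\{M_k\}}(G_1\times G_2)$, with inverse multiplication by $e^{-Q}$. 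Combining this with Propositions \ref{automrou} and \ref{autodualrou}, which give that $\Psi_a$ is an automorphism of $\Gamma_{\{M_k\}}(G_1\times G_2)$ and of $\Gamma'_{\{M_k\}}(G_1\times G_2)$ with inverse $\Psi_{-a}$, I conclude that $T$ is an automorphism of both spaces, with $T^{-1} = e^{-Q}\circ\Psi_{-a}$. The chain of identities preceding the statement then reads
\begin{equation*}
T\circ L_{aq} = L_{a_0q_0}\circ T, \qquad\text{equivalently}\qquad L_{aq} = T^{-1}\circ L_{a_0q_0}\circ T.
\end{equation*}

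For the hypoellipticity equivalence I would argue by chasing definitions. Assume $L_{a_0q_0}$ is globally $\Gamma_{\{M_k\}}$--hypoelliptic and let $u\in\Gamma'_{\{M_k\}}(G_1\times G_2)$ satisfy $L_{aq}u = f \in \Gamma_{\{M_k\}}(G_1\times G_2)$. Applying $T$ and using the intertwining gives $L_{a_0q_0}(Tu) = Tf$. Since $T$ preserves $\Gamma_{\{M_k\}}$ we have $Tf\in\Gamma_{\{M_k\}}(G_1\times G_2)$, while $Tu\in\Gamma'_{\{M_k\}}(G_1\times G_2)$; hypoellipticity of $L_{a_0q_0}$ forces $Tu\in\Gamma_{\{M_k\}}(G_1\times G_2)$, and then $u = T^{-1}(Tu)\in\Gamma_{\{M_k\}}(G_1\times G_2)$ because $T^{-1}$ also preserves $\Gamma_{\{M_k\}}$. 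The reverse implication is identical, with the roles of $T$ and $T^{-1}$ exchanged.

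For the solvability equivalence the same conjugation transports ranges and compatibility spaces. By the natural analogue of Definition \ref{definitionsolv} with the automorphism $T$ in place of $\Psi_a$, the admissible right-hand sides for $L_{aq}$ are exactly those $f$ with $Tf$ lying in the compatibility set $\mathcal{K}_{a_0q_0}$ of the constant-coefficient operator $L_{a_0q_0}$; since $T$ is a bijection of $\Gamma'_{\{M_k\}}(G_1\times G_2)$ this set equals $T^{-1}(\mathcal{K}_{a_0q_0})$. Given such an $f$, global solvability of $L_{a_0q_0}$ yields $w\in\Gamma'_{\{M_k\}}(G_1\times G_2)$ with $L_{a_0q_0}w = Tf$; setting $u = T^{-1}w$ and using $L_{aq}\circ T^{-1} = T^{-1}\circ L_{a_0q_0}$ gives $L_{aq}u = f$, so $f$ lies in the range. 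The opposite inclusion follows from $T(L_{aq}u) = L_{a_0q_0}(Tu)\in\mathcal{K}_{a_0q_0}$, and reversing the argument gives the converse.

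I expect the only genuine point to check is the first paragraph: that multiplication by $e^{Q}$ is a well-defined automorphism of the Komatsu class and of its dual. This rests on $\Gamma_{\{M_k\}}(G_1\times G_2)$ being stable under multiplication, a consequence of (M.2) together with the Leibniz rule, and on $e^{\pm Q}$ belonging to the class, which was asserted above and itself uses closure of the class under composition with entire functions. Once this is in place both equivalences are purely formal, exactly as in the smooth and distributional setting of \cite{KMR19b}.
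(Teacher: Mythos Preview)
Your proposal is correct and follows exactly the approach implicit in the paper: the paper itself gives no proof beyond ``a consequence of what was done previously,'' having already recorded the intertwining $\Psi_a\circ e^{Q}\circ L_{aq} = L_{a_0q_0}\circ \Psi_a\circ e^{Q}$ and the automorphism properties of $\Psi_a$, so your write-up simply spells out the formal transfer argument (and the needed fact that multiplication by $e^{\pm Q}$ is an automorphism) that the paper leaves to the reader.
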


We have similar results in the settings of Komatsu classes of Beurling type.

\section{Examples}

In this section we will consider the sequence $\{M_k \}_{k \in \N_0}$ given by $M_k=(k!)^s$, with $s \geq 1$. So, the Komatsu class of Roumieu type associated to this sequence is the Gevrey space $\gamma^s(G)$ and we have that the associated function satisfies
$$
M(r) \simeq r^{1/s}, 
$$ for all $r \geq0$. 

In this framework we present a class of examples in $\mathbb{T}^1 \times \St$ and in $\St \times \St$. Examples of operators defined on tori in Gevrey spaces  can be found on \cite{AKM19,BDG18}.

\subsection{$G=\mathbb{T}^1\times \St$}\label{ext1xs3}
\mbox{} 

	Consider the continued fraction  $\alpha=\left[10^{1 !}, 10^{2 !}, 10^{3 !}, \ldots\right]$ and a normalized vector field $X \in \mathfrak{s}^3$. Using rotation on $\St$, without loss of generality, we may assume that $X$ has the symbol 
	$$
	\sigma_{X}(\ell)_{mn}=im\delta_{mn}, \quad \ell \in \tfrac{1}{2}\N_0, \ -\ell\leq m,n\leq \ell, \ \ell-m, \ell-n \in \N_0,
	$$ 
	with $\delta_{mn}$ standing for the Kronecker's delta. The details about the Fourier analysis on $\St$ can be found in Chapter 11 of \cite{livropseudo}. 
	
	Consider the operator
	$$
	L_{a}=\partial_t + a(t) X,
	$$
	where $a(t)=\sin(t)+\alpha$. Notice that $a \in \gamma^s(\mathbb{T}^1)$, for all $s \geq 1$ and the function $A: t \mapsto -\cos(t)$ satisfies $\partial_tA(t)=a(t)-\alpha$. By Proposition \ref{normalform}, we can study the global properties of $L_a$ from the operator
	$$
	L_{a_0} = \partial_t + \alpha X.
	$$
	By Theorem \ref{thmconstant}, the operator $L_{a_0}$ is not globally $\gamma^s$--hypoelliptic because the set
	$$
	\mathcal{N}=\left\{(k,\ell) \in \Z \times \tfrac{1}{2}\N_0; \ k+\alpha m = 0, \mbox{ for some } -\ell \leq m \leq \ell, \ \ell-m \in \Z \right\}
	$$
	has infinitely many elements. However, since $\alpha$ is not an exponential Liouville number of order $s$, for any $s\geq 1$, for all $N>0$ there exists $C_N>0$ such that
	$$
	|k+\alpha m| \geq C_N \exp\{-N (|k|+ \ell +1)^{1/s} \},
	$$
	for all $k \in \Z$, $\ell \in \N_0$, $-\ell  \leq m \leq \ell$, $\ell -m \in \N_0$, whenever $k+\alpha m \neq 0$. Therefore, the operator $L_{a_0}$ is globally $\gamma^s$--solvable, for any $s \geq 1$. In addition, since $\alpha$ is a Liouville number, the operator $L_{a_0}$ is not globally solvable in the $C^\infty$--sense.
	
	We conclude then that the operator $L_a$ is neither globally $\gamma^s$-hypoelliptic, nor globally solvable, but it is globally $\gamma^s$--solvable, for any $s\geq 1$.
	
    Consider now
	$$
	L_{aq}=\partial_t+a(t)X+q(t,x)
	$$
	where $X\in \mathfrak{s}^2$, $a(t)=\sin(t)+\alpha$, and  $q(t,x)=\cos(t)+(\sin(t)+\alpha)h(x)+\frac{1}{2}i $, where $h$ is expressed in Euler's angle by
	\begin{equation*}
	h(x(\phi,\theta,\psi)) = -\cos\left(\tfrac{\theta}{2}\right)\sin\left(\tfrac{\phi+\psi}{2}\right),
	\end{equation*}
	where $0 \leq \phi < 2\pi$, $0 \leq \theta \leq \pi$, $-2\pi \leq \psi < 2\pi$.	Notice that $q$ is an analytic function, which implies that $q \in \gamma^s(\mathbb{T}^1\times \St)$ for all $s \geq 1$. 
	
	The vector field $X$ is the operator $\partial_{\psi}$ in Euler's angle  and we have that $X {\textrm{tr}}(x) = h(x)$, where the trace function ${\mbox{tr}}$ is expressed in Euler's angle  by
	$$
	{\mbox{tr}}(x(\phi,\theta,\psi)) = 2\cos\left(\tfrac{\theta}{2}\right)\cos\left(\tfrac{\phi+\psi}{2}\right).
	$$

	The function $Q(t,x)=\sin(t)+{\mbox{tr}}(x)$ satisfies
	$$
	(\partial_t+a(t) X) Q(t,x)=q(t,x)-\tfrac{1}{2}i.
	$$
	By Proposition \ref{evopert}, the operator
	$$
	L_{aq} = \partial_t + (\sin(t)+\alpha)X + \left\{ \cos(t)+(\sin(t)+\alpha)h(x)+\tfrac{1}{2}i \right\}
	$$
	is globally $\gamma^s$--hypoelliptic if and only if
	$$
	L_{a_0q_0} = \partial_t + \alpha X + \tfrac{1}{2}i
	$$
	is globally $\gamma^s$--hypoelliptic. By Example 6.7 of \cite{KMR19c}, we conclude that  $L_{aq}$ is globally $\gamma^s$--hypoelliptic for any $s \geq 1$, which implies that it is also globally $\gamma^s$--solvable, for any $s \geq 1$. In addition, the operator $L_{aq}$  is neither globally hypoelliptic nor globally solvable in $C^\infty$--sense, because $L_{a_0q_0}$ has these properties.
	
	Similarly, the operator 	
	$$
	L_{aq} = \partial_t + (\sin(t)+\alpha)X + \left\{ \cos(t)+(\sin(t)+\alpha)h(x)+\alpha i \right\}
	$$
	is not globally $\gamma^s$--hypoelliptic but is globally $\gamma^s$--solvable because
	$$
	L_{a_0q_0} = \partial_t + \alpha X + \alpha i
	$$
	has these properties. Again, the operator $L_{aq}$ is neither globally hypoelliptic nor globally solvable in the $C^\infty$--sense.

\subsection{$G=\St\times \St$}\label{exs3xs3}
\mbox{}

Consider the operator
	$$
	L_{h} = X_1 + h(x_1)X_2,
	$$
	where $X_1, X_2 \in \mathfrak{s}^3$, $h$ is expressed in Euler's angle by
	\begin{equation*}
	h(x_1(\phi_1,\theta_1,\psi_1)) = -\cos\left(\tfrac{\theta_1}{2}\right)\sin\left(\tfrac{\phi_1+\psi_1}{2}\right)+ \alpha,
	\end{equation*}
	where $0 \leq \phi_1 < 2\pi$, $0 \leq \theta_1 \leq \pi$, $-2\pi \leq \psi_1 < 2\pi$, and $\alpha$ is the continued fraction  $\left[10^{1 !}, 10^{2 !}, 10^{3 !}, \ldots\right]$. Moreover, we will assume that the vector field $X_1$ acts only in the first variable, while $X_2$ acts only in the second variable. In this way, we may assume that
	$$
	\sigma_{X_1}(\ell)_{mn}=im\delta_{mn}, \quad \ell \in \tfrac{1}{2}\N_0, \ -\ell\leq m,n\leq \ell, \ \ell-m, \ell-n \in \N_0,
	$$ 
	and
	$$
	\sigma_{X_2}(\kappa)_{rs}=ir\delta_{rs}, \quad \kappa \in \tfrac{1}{2}\N_0, \ -\kappa\leq r,s\leq \kappa, \ \kappa-r, \kappa-s \in \N_0.
	$$ 
So, the $X_j$ is the operator $\partial_{\psi_j}$ in Euler's angles, for $j=1,2.$ Since $X_1 \mbox{tr}(x_1) = h(x_1)-\alpha$, with $\mbox{tr}$ as in Example \ref{ext1xs3},
it is enough to understand the global properties of the operator
	$$
	L_{h_0}=X_1 + \alpha X_2
	$$
	for the study of the global properties of $L_{h}$. By Theorem \ref{thmconstant}, the operator $L_{h_0}$ is not globally $\gamma^s$--hypoelliptic because the set
	$$
	\mathcal{N}=\left\{(\ell,\kappa) \in \tfrac{1}{2}\N_0\times \tfrac{1}{2}\N_0 ; \ m+\alpha r=0, \mbox{ for some } -\ell \leq m \leq  \ell, \ -\kappa \leq r \leq \kappa \right\} 
	$$ 
	has infinitely many elements. However, since $\alpha$ is not an exponential Liouville number or order $s$, for any $s \geq 1$, for all $N>0$ there exists $C_N>0$ such that
	$$
	|m + \alpha r| \geq C_N\exp\{-N(\ell+\kappa+1)^{1/2} \},
	$$
	for all $\kappa, \ell \in \tfrac{1}{2}\N_0$, $-\ell \leq m \leq \ell$, $-\kappa \leq r \leq \kappa$, whenever $m+\alpha r \neq 0$. Thus, the operator $L_{h_0}$ is globally $\gamma^s$--solvable, for any $s \geq 1$. Furthermore, $L_{h_0}$ is not globally solvable in the $C^\infty$--sense because $\alpha$ is a Liouville number.
	Therefore, the operator $L_h$ is neither globally $\gamma^s$--hypoelliptic, nor globally $C^\infty$--solvable, but it is globally $\gamma^s$--solvable, for any $s \geq 1$.

Consider now  the operator
	$$
	L_{hq} = X_1 + h(x_1)X_2 + q(x_1,x_2),
	$$
	where $q$ is given by
	$$
	q(x_1,x_2)=p_1(x_1)+h(x_1)p_2(x_2)+\tfrac{1}{2}i,
	$$
	where $p_1$ and $p_2$ are the projections of ${\mbox{SU}(2)} \simeq \St$ given in Euler's angle by
	$$
	p_1(x(\phi,\theta,\psi))= \cos\left(\tfrac{\theta}{2}\right)e^{i(\phi+\psi)/2}
	\quad \mbox{and} \quad
	p_2(x(\phi,\theta,\psi))= i\sin\left(\tfrac{\theta}{2}\right)e^{i(\phi-\psi)/2},
	$$
	where $0 \leq \phi < 2\pi$, $0 \leq \theta \leq \pi$, $-2\pi \leq \psi < 2\pi$.
 It is easy to see that the function $Q(x_1,x_2) = 2i(p_2(x_2)-p_1(x_1))$ satisfies
	$$
	(X_1+h(x_1)X_2)Q(x_1,x_2)=q(x_1,x_2)-\tfrac{1}{2}i.
	$$
	Since $Q$ is analytic, we have that $Q \in \gamma^s(\St\times \St)$, for any $s \geq 1$. By Proposition \ref{evopert}, we can extract the global properties of $L_{hq}$ from the operator
	$$
	L_{h_0q_0}=X_1+\alpha X_2 + \tfrac{1}{2}i.
	$$
As in Example \ref{ext1xs3}, we conclude by Theorem \ref{thmperconstant} that the operator $L_{h_0q_0}$ is globally $\gamma^s$--hypoelliptic for any $s\geq 1$, but is not globally solvable in the $C^\infty$--sense.  By Proposition \ref{evopert}, the operator $L_{hq}$ has the same properties of $L_{h_0q_0}$.

\section*{Acknowledgments}

This study was financed in part by the Coordena\c c\~ao de Aperfei\c coamento de Pessoal de N\'ivel Superior - Brasil (CAPES) - Finance Code 001. The last
author was also supported by the FWO Odysseus grant, by the Leverhulme Grant RPG-2017-151, and by EPSRC Grant EP/R003025/1.
\bibliographystyle{abbrv}
\bibliography{biblio}

\end{document}